\newtheorem{thm}{Theorem}[section]
\newtheorem{cor}[thm]{Corollary}
\newtheorem{lem}[thm]{Lemma}
\newtheorem{prop}[thm]{Proposition}
\newtheorem{defn}[thm]{Definition}
\newtheorem{rem}[thm]{Remark}
\newtheorem{ex}[thm]{Example}
\newtheorem{conj}[thm]{Conjecture}
\newcommand{\subject}[1]{\begin{flushleft}\textbf{Mathematics  Subject Classification[2010]}: #1\end{flushleft}}
\newcommand{\keyword}[1]{\par\noindent \textbf{Keywords:} #1 }
\def\cocoa{{\hbox{\rm C\kern-.13em o\kern-.07em C\kern-.13em o\kern-.15em A}}}
\def\sqr#1#2{{\vcenter{\hrule height.#2pt
        \hbox{\vrule width.#2pt height#1pt \kern#1pt
            \vrule width.#2pt}
        \hrule height.#2pt}}}
\def\depth{{\rm depth}\,}
\def\ker{{\rm Ker}\,}
\def\reg{{\rm reg}\,}
\def\projdim{{\rm proj dim}\,}
\def\NN{{\mathbb N}}
\def\ZZ{{\mathbb Z}}
\def\C{{\mathcal C}}
\begin{document}

\title{\bf Regularity and Free Resolution of Ideals which are Minimal to $d$-linearity}
\author{Marcel Morales$^{1,3}$, Ali Akbar Yazdan Pour$^{1,2}$, Rashid Zaare-Nahandi$^2$\\
\small $^{1}$ Universit\'e de Grenoble I, Institut Fourier, Laboratoire de Math\'ematiques, France\\
\small $^{2}$ Institute for Advanced Studies in Basic Sciences, P. O. Box 45195-1159, Zanjan, Iran\\
\small $^{3}$ IUFM, Universit\'e de Lyon I, France}

\date{}
\maketitle

\begin{abstract}

Toward a partial classification of monomial ideals with $d$-linear resolution, in this paper,  some classes of $d$-uniform clutters which do not have linear resolution, but every proper subclutter of them has a $d$-linear resolution, are introduced and the regularity and Betti numbers of circuit ideals of such clutters are computed. Also, it is proved that for given two $d$-uniform clutters $\mathcal{C}_1, \mathcal{C}_2$, the Castelnuovo-Mumford regularity of the ideal $I(\overline{\mathcal{C}_1 \cup \mathcal{C}_2})$ is equal to the maximum of regularities of $I(\bar{\C}_1)$ and $I(\bar{\C}_2)$, whenever $V(\mathcal{C}_1) \cap V(\mathcal{C}_2)$ is a clique or ${\rm SC}(\mathcal{C}_1) \cap {\rm SC}(\mathcal{C}_2)=\emptyset$.

As applications, alternative proofs are given for Fr\"oberg's Theorem on linearity of edge ideal of graphs with chordal complement as well as for linearity of generalized chordal hypergraphs defined by Emtander. Finally, we find minimal free resolutions of the circuit ideal of a triangulation of a pseudo-manifold and a homology manifold explicitly.
\medskip
\keyword{minimal free resolution, Castelnuovo-Mumford regularity, clutter, Betti number, pseudo-manifold, triangulation.}
\subject{13D02, 51H30}
\end{abstract}

\section{Introduction}\label{Section1}

Although the problem of classification of monomial ideals with $d$-linear resolution is solved for $d=2$, it is still open for $d>2$. Passing via polarization, it is enough to solve the problem for square-free monomial ideals. An ideal generated by square-free monomials of degree $2$ can be assumed as edge ideal of a graph and more generally, an ideal generated by square-free monomials  of degree $d$ is the circuit ideal of a $d$-uniform clutter. R. Fr\"oberg~\cite{Fr} proved that the edge ideal of a graph $G$ has a $2$-linear resolution if and only if in the complement graph of $G$ every cycle of length greater than $3$ has a chord. In this case, linearity of resolution is not depending to characteristics of the ground field. To generalize the Fr\"oberg's result to higher dimensional clutters, we face that linearity of resolution of a circuit ideal of a $d$-uniform clutter for $d>2$ depends on the characteristics of the ground field. For instance, the ideal corresponding to triangulation of the projective plane has a linear resolution in characteristics 0 while it does not have linear resolution in characteristics 2. In a new proof of Fr\"oberg's Theorem in \cite{mar}, the notion of cycle plays a key role. That means:
\begin{itemize}
\item[\rm (1)] Cycles are exactly those graphs that are minimal to 2-linearity.
\item[\rm (2)] The edge ideal of $\bar G$ does not have 2-linear resolution if and only if $G$ contains a cycle of length $> 3$, as induced subgraph.
\end{itemize}

Trying to find a similar notion for cycles, we introduce the notion of minimal to $d$-linearity in arbitrary $d$-uniform  clutters. By Proposition~\ref{Pseudo-manifolds are minimal to linearity}, pseudo-manifolds have the property of minimal to $d$-linearity. Also we know that, if $\C$ is a $d$-uniform clutter which has an induced subclutter isomorphic to a $d$-dimensional pseudo-manifold, then the ideal $I(\bar{\C})$ does not have linear resolution. But, Example~\ref{non-pseudo-manifold}, shows that the class of pseudo-manifolds is strictly contained in the class of minimal to linearity clutters.   
Another difficulty for generalizing the Fr\"oberg's Theorem, is the term `induced' in (2). That is, there are clutters which do not have a linear resolution and do not have any induced subclutter minimal to $d$-linearity. For instance,  consider $\C$ is a triangulation of the sphere (with large enough number of vertices), which is a pseudo-manifold, let $v_1, v_2, v_3$ be vertices such that $v_1,v_2$ belong to a circuit of $\C$ and neither $v_1, v_3$ nor $v_2,v_3$ belong to any circuit. Then add a new circuit $\{v_1,v_2,v_3\}$ to $\C$. The new clutter does not have any induced subclutter which is minimal to $d$-linearity, however its circuit ideal does not have $d$-linear resolution.   

In \cite{Emtander, HaVanTuyl, VanTuylVillarreal, Wood1} the authors have partially generalized the Fr\"oberg's Theorem. They have introduced several definitions of chordal clutters and proved that corresponding circuit ideals have linear resolution. In~\cite{maar}, the notion of simplicial submaximal circuit is introduced and proved that removing such submaximal circuits does not change the regularity of the circuit ideal. This proves linearity of resolution of a large class of clutters (Remark 3.10 in \cite{maar}). To attack to this problem from another side, in the present paper, we investigate clutters which does not have linear resolution, but any proper subclutter of them has a linear resolution.

Section~\ref{Section2} is devoted to collect prerequisites and basic definitions which we need in the next chapters. In Section~\ref{Section3}, some homological behaviours  of the Stanley-Reisner ideal of a simplicial complex $\Delta$ with ${\rm indeg}\, (I_\Delta) \geq 1+ \dim \Delta$ are investigated and some minor extendings are made for results of Terai and Yoshida in \cite{Terai2}. 

Sections~\ref{Section4} and \ref{Section5} contain main results of this paper. Section~\ref{Section4} is about uniform clutters and their circuit ideals. In this section, we prove that for two $d$-uniform clutters $\mathcal{C}_1, \mathcal{C}_2$, the Castelnuovo-Mumford regularity of the ideal $I(\overline{\mathcal{C}_1 \cup \mathcal{C}_2})$, is maximum of the regularities of these two components, whenever $V(\mathcal{C}_1) \cap V(\mathcal{C}_2)$ is a clique or ${\rm SC}(\mathcal{C}_1) \cap {\rm SC}(\mathcal{C}_2)=\varnothing$ (See Definition~\ref{SC}).  In Section~\ref{Section5}, we define notions of obstruction to $d$-linearity, minimal to $d$-linearity and almost tree clutters. These are clutters such that their circuit ideals do not have $d$-linear resolution but any proper subclutter of them has a $d$-linear resolution. We compare these classes and then, compute explicitly the minimal free resolution of clutters which are minimal to $d$-linearity.

In Section~\ref{Section6}, as some applications to the results of previous sections, we give an alternative proof for the Fr\"oberg's theorem. Also a proof for linearity of resolution of generalized chordal hypergraphs defined by Emtander in~\cite{Emtander} is given. Finally, we find minimal free resolutions of circuit ideals of triangulations of pseudo-manifolds and homology manifolds.

\section{Preliminaries}\label{Section2}

Let $K$ be a field and $(R,{\mathfrak m})$ a Noetherian graded local ring with residue field
$K$. Let $M$ be a finitely generated graded $R$-module and
$$
 \cdots \to F_2 \to F_1 \to F_0 \to M \to 0
$$
a minimal graded free resolution of $M$ with $F_i = \oplus_j R(-j)^{\beta^K_{i,j}}$ for all $i$.

The numbers $\beta_{i,j}^K(M) = \dim_K \mbox{Tor}^R_i(K,M)_j$ are called the \textit{graded Betti numbers}
of $M$ and
$$
\mbox{projdim}(M) = \sup\{i : \quad \mbox{Tor}^R_i(K,M) \neq 0\}
$$
is called the projective dimension of $M$. Throughout this paper, we fix a field $K$ and for convenience we write simply $\beta_{i,j}$ instead of $\beta_{i,j}^K$. The Auslander-Buchsbaum Theorem enables us to find the projective dimension in terms of depth.

\begin{thm}[{Auslander-Buchsbaum \cite[Theorem 1.3.3]{BH}}] \label{AuslanderBuchsbaum}
Let $(R,\mathfrak{m})$ be a Noetherian local ring, and $M \neq 0$ a finitely generated $R$-module. If $\projdim M < \infty $, then
$$\projdim M + \depth M = \depth R.$$
\end{thm}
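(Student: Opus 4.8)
The plan is to prove the equality by induction on $p=\projdim M$, using the description of depth through Ext: for every nonzero finitely generated $R$-module $L$ one has $\depth L=\inf\{\,i:\Ext{R}{i}{K}{L}\neq 0\,\}$, a finite number. If $p=0$, then $M$ is free and $\Ext{R}{i}{K}{M}$ is a finite direct sum of copies of $\Ext{R}{i}{K}{R}$, so $\depth M=\depth R$ and the formula holds. If $p\geq 1$, then $M$ is not free, so I would choose a minimal free cover $0\to N\to F\to M\to 0$ with $F$ free of finite rank and $N\subseteq\mathfrak{m}F$; then $\projdim N=p-1$, and the induction hypothesis gives $\depth N=\depth R-p+1$. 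Setting $t=\depth R$, the idea is to run this short exact sequence through the long exact sequence of $\Ext{R}{\bullet}{K}{-}$ and locate the least $i$ with $\Ext{R}{i}{K}{M}\neq 0$, knowing that $\Ext{R}{i}{K}{F}$ vanishes exactly for $i<t$ and $\Ext{R}{i}{K}{N}$ vanishes exactly for $i<\depth N$.

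If $p\geq 2$ this is immediate: $\depth N=t-p+1<t$, so in the long exact sequence the term $\Ext{R}{j}{K}{F}$ with $j=\depth N$ already vanishes, one reads off $\Ext{R}{j-1}{K}{M}\cong\Ext{R}{j}{K}{N}\neq 0$ and $\Ext{R}{i}{K}{M}=0$ for $i<j-1$, hence $\depth M=\depth N-1=t-p=\depth R-\projdim M$. The delicate case, which I expect to be the main obstacle, is $p=1$: here $N$ is free with $\depth N=t=\depth F$, so vanishing of the free modules' Ext no longer pins down the answer. The remedy is minimality: with respect to bases, the inclusion $N\hookrightarrow F$ is represented by a matrix with entries in $\mathfrak{m}$, and since multiplication by an element of $\mathfrak{m}$ kills $\Ext{R}{i}{K}{R}$ (that map is induced by multiplication on $K$, which is $0$), the induced map $\Ext{R}{i}{K}{N}\to\Ext{R}{i}{K}{F}$ is zero for every $i$. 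The long exact sequence then breaks into short exact sequences $0\to\Ext{R}{i}{K}{F}\to\Ext{R}{i}{K}{M}\to\Ext{R}{i+1}{K}{N}\to 0$, so $\Ext{R}{i}{K}{M}\neq 0$ exactly for $i\geq t-1$, i.e. $\depth M=t-1=\depth R-\projdim M$.

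Two points have to be checked to make the induction legitimate, namely that the syzygy $N$ above really satisfies $\depth N=\depth R-p+1\geq 1$ when $p\geq 2$, equivalently that $\projdim M\leq\depth R$. First, if $\depth R=0$ then every $R$-module of finite projective dimension is free: otherwise the last map of its minimal free resolution would be an injection given by a matrix over $\mathfrak{m}$, which annihilates any nonzero element of $R$ killed by $\mathfrak{m}$, and such an element exists since $\mathfrak{m}\in\mathrm{Ass}(R)$; thus $p\geq 1$ forces $\depth R\geq 1$. Second, over a ring of positive depth a nonzero submodule of a finitely generated free module has positive depth, because a nonzerodivisor of $R$ contained in $\mathfrak{m}$ remains a nonzerodivisor on the submodule. (An alternative that avoids the $p=1$ subtlety is to induct on $\depth R$: if $\depth M>0$, pick by prime avoidance an $x\in\mathfrak{m}$ that is a nonzerodivisor on $R$ and on $M$, pass to $R/xR$, where $\depth R$, $\depth M$ each drop by one and $\projdim M$ is unchanged, and apply the induction hypothesis; if $\depth M=0$, replace $M$ by its first syzygy $N$, which has $\depth N=1$ by the depth lemma, apply the case just settled to $N$, and add $1$.)
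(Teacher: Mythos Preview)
Your proof is correct and is essentially the classical argument (by induction on $\projdim M$, using the Ext characterization of depth and minimality of the syzygy map in the $p=1$ case); the optional variant you sketch at the end, inducting on $\depth R$ via a regular element, is the other standard route. There is, however, nothing in the paper to compare it with: the authors do not prove this theorem at all, they merely quote it from Bruns--Herzog \cite[Theorem~1.3.3]{BH} as background in the preliminaries. So your write-up supplies a proof where the paper gives only a citation; if anything, it reproduces the argument one would find in that reference.
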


The \textit{Castelnuovo-Mumford regularity} $\mbox{reg}(M)$ of $M\neq 0$ is given by
$$
\mbox{reg}(M) = \sup\{j - i : \quad \beta_{i,j}(M) \neq 0\}.
$$
The \textit{initial degree} $\mbox{indeg}(M)$ of $M$ is given by
$$
\mbox{indeg}(M) = \inf\{i : \quad M_i \neq 0\}.
$$
We say that a finitely generated graded $R$-module $M$ has a\textit{ $d$-linear resolution} if its
regularity is equal to $d = \mbox{indeg}(M)$.

A \textit{simplicial complex} $\Delta$  over a set of vertices $V=\{ v_{1}, \ldots, v_{n} \}$ is a collection of subsets of $V$, such that $\{ v_{i} \} \in \Delta $ for all $i$, and if $F\in \Delta$, then all subsets of $F$ are also in $\Delta$ (including the empty set). An element of $\Delta$ is called a \textit{face} of $\Delta$, and the \textit{dimension} of a face $F$ of $\Delta$ is $\left|F\right|-1$, where $\left|F\right|$ is the number of elements of $F$. The maximal faces of $\Delta$ under inclusion are called \textit{facets} of $\Delta$. The \textit{dimension} of  $\Delta$, $\dim \Delta$, is the maximum of dimensions of its facets. Let $\mathcal{F}(\Delta) =\{F_{1}, \ldots, F_{q}\}$ be the facet set of $\Delta$. A simplicial complex $\Gamma$ is called a \textit{subcomplex} of $\Delta$ if $\mathcal{F}(\Gamma) \subset \mathcal{F}(\Delta)$. The \emph{non-face ideal} or the \emph{Stanley-Reisner ideal} of $\Delta$, denoted by $I_\Delta$, is the ideal of $S=K[x_1, \ldots, x_n]$ generated by square-free monomials $\{x_{i_1} \cdots x_{i_r} | \{ v_{i_1}, \ldots, v_{i_r}\} \notin \Delta \}$. Also we call $K[\Delta]:=S/I_\Delta$ the \emph{Stanley-Reisner ring} of $\Delta$. We have
$$I_\Delta=\bigcap\limits_{F \in \mathcal{F}(\Delta)}P_{\bar{F}}$$
where $P_{\bar{F}}$ denotes the (prime) ideal generated by $\{x_i | v_i \notin F \}$. In particular,
$\dim K[\Delta] =1+ \dim \Delta$.

For a simplicial complex $\Delta$ of dimension $d$, let $f_i = f_i(\Delta) $ denote the number of faces of $\Delta$ of dimension $i$ and by convention  $f_{-1}=1$. The sequence $\textbf{f}(\Delta) = (f_{-1}, f_0, \ldots, f_{d-1})$ is called the $\textbf{f}$-\textit{vector} of $\Delta$.

Let $\Delta$ be a simplicial complex with vertex set $V$. An \textit{orientation} on $\Delta$ is a linear order on $V$. A simplicial complex together with an orientation is an \emph{oriented simplicial complex}.

Suppose $\Delta$ is an oriented simplicial complex of dimension $d$, and $F \in \Delta$ a face of dimension $i$. We write $F = [v_0, \ldots, v_i]$ if $F = \{v_0, \ldots, v_i\}$ and $v_0< \cdots< v_i$, and $F = [\ ]$ if $F = \varnothing$. With this notation, we define the \emph{augmented oriented chain complex of $\Delta$},

$$\begin{CD}
\tilde{\mathscr{C}}(\Delta):\quad 0 @>\partial_{d+1}>> \C_{d} @>\partial_d>> \C_{d-1} @>\partial_{d-1}>> \cdots  @>\partial_{1}>> \C_{0} @>\partial_{0}>> \C_{-1} \longrightarrow 0
\end{CD}$$
by setting
\begin{center}
$\C_i=\bigoplus\limits_{\substack{F \in \Delta  \\ \dim F = i}}{KF} \qquad \text{ and } \qquad \partial_i(F)=\sum \limits_{j=1}^{i}{{(-1)}^jF_j}$
\end{center}
for all $F \in \Delta$; here $F_j = [v_0, \ldots, \hat{v}_j,\ldots, v_i]$ for $F = [v_0, \ldots, v_i]$. A straightforward computation shows that $\partial_i \circ \partial_{i+1}=0$. We set
$$\tilde{H}_i(\Delta; K)=H_i(\tilde{\mathscr{C}}(\Delta))=\frac{\ker \partial_i}{{\rm Im}\, \partial_{i+1}}, \qquad i = -1 ,\ldots, d,$$
and call $\tilde{H}_i(\Delta; K)$ the \emph{$i$-th reduced simplicial homology of $\Delta$}.

If $\Delta$ is a simplicial complex and $\Delta_1$ and $\Delta_2$ are subcomplexes of $\Delta$, then there is an exact sequence
\begin{align} \label{Reduced Mayer-Vietoris sequence}
\cdots \to \tilde{H}_j(\Delta_1 \cap \Delta_2 ; K) \to \tilde{H}_j(\Delta_1; K) \oplus \tilde{H}_j(\Delta_2; K) \to  \tilde{H}_j(\Delta_1 \cup \Delta_2 ; K) \to \tilde{H}_{j-1}(\Delta_1 \cap \Delta_2; K) \to \cdots
\end{align}
with all coefficients in $K$ called the \textit{reduced Mayer-Vietoris sequence} of $\Delta_1$ and $\Delta_2$.

\begin{defn}\rm
Let $\Delta$ be a simplicial complex of dimension $d$ and $\textbf{f}(\Delta) =(f_{-1},f_0, \ldots, f_d)$ be the $\textbf{f}$-vector of $\Delta$. The number
$$\chi(\Delta)= \sum\limits_{i=0}^{d} (-1)^if_i$$
is called the \textit{Euler characteristic} of $\Delta$.

In terms of simplicial homology, one has
\begin{equation}\label{Euler characteristic in terms of Homology}
-1+ \chi(\Delta)= \sum\limits_{i=0}^{d} (-1)^i \dim_K \tilde{H}_i(\Delta;K).
\end{equation}
\end{defn}

Hochster's formula describes the Betti number of a square-free monomial ideal $I$ in terms of the dimension of reduced homology of $\Delta$, when $I=I_\Delta$.

\begin{thm}[Hochster formula] \label{Hochster Formula}
Let $\Delta$ be a simplicial complex on $[n]$. Then,
\begin{equation*}
\beta^K_{i,j}(I_\Delta) = \sum\limits_{\substack{W \subset [n] \\ |W|=j}}{\dim_K \tilde{H}_{j-i-2}(\Delta_W; K)},
\end{equation*}
where $\Delta_W$ is the simplicial complex with vertex set $W$ and all faces of $\Delta$ with vertices in $W$.
\end{thm}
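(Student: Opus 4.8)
The plan is to compute the $\mathbb{Z}^n$-graded modules $\mathrm{Tor}^S_p(K,S/I_\Delta)$ by means of the Koszul complex, and then to pass from $S/I_\Delta$ to $I_\Delta$ by a homological shift. Write $S=K[x_1,\dots,x_n]$, $\mathfrak{m}=(x_1,\dots,x_n)$, and let $\mathcal{K}_\bullet=\mathcal{K}_\bullet(x_1,\dots,x_n;S)$ be the Koszul complex; it is a minimal $\mathbb{Z}^n$-graded free resolution of $K$ over $S$, with $\mathcal{K}_p=\bigoplus_{F\subseteq[n],\,|F|=p}S(-\mathbf{e}_F)\,e_F$, where $\mathbf{e}_F=\sum_{\ell\in F}\mathbf{e}_\ell$ and $\partial(e_F)=\sum_{\ell\in F}\pm\,x_\ell\,e_{F\setminus\ell}$. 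Hence $\mathrm{Tor}^S_p(K,S/I_\Delta)=H_p(\mathcal{K}_\bullet\otimes_S S/I_\Delta)$, and since every term is $\mathbb{Z}^n$-graded it suffices to analyse this complex strand by strand in each multidegree $\mathbf{a}\in\mathbb{Z}^n$.

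First I would reduce to square-free multidegrees. In multidegree $\mathbf{a}$ the homological degree-$p$ term of $\mathcal{K}_\bullet\otimes_S S/I_\Delta$ has $K$-basis the exterior generators $e_F$ with $|F|=p$, $F\subseteq\mathrm{supp}(\mathbf{a})$ and $\mathrm{supp}(\mathbf{a}-\mathbf{e}_F)\in\Delta$. Separating off the variables occurring in $\mathbf{a}$ with exponent $\ge 2$ lets one factor the whole strand as the total complex of a tensor product of complexes of $K$-vector spaces in which one factor is the (exact) Koszul complex on those variables; by Künneth the strand is then exact. So the strand contributes to homology only when $\mathbf{a}\in\{0,1\}^n$, i.e.\ $\mathbf{a}=\mathbf{e}_W$ for some $W\subseteq[n]$; fix such a $W$ and set $j=|W|$.

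Next I would identify the strand in multidegree $\mathbf{e}_W$ with a reduced (co)chain complex of $\Delta_W$. In multidegree $\mathbf{e}_W$ the constraints force $F\subseteq W$ and $\mathbf{a}-\mathbf{e}_F=\mathbf{e}_{W\setminus F}$, so the degree-$p$ term is $\bigoplus_{F\subseteq W,\,|F|=p,\,W\setminus F\in\Delta}K\,e_F$. Re-indexing each $e_F$ by the face $G:=W\setminus F\in\Delta_W$, which has dimension $j-p-1$, turns the degree-$p$ term into $\bigoplus_{G\in\Delta_W,\ \dim G=j-p-1}K\,G$, and chasing the Koszul differential through this bijection shows it is carried (up to sign) to the simplicial coboundary map of $\Delta_W$. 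Thus $H_p\bigl((\mathcal{K}_\bullet\otimes_S S/I_\Delta)_{\mathbf{e}_W}\bigr)\cong\tilde{H}^{\,j-p-1}(\Delta_W;K)$, which over the field $K$ has the same dimension as $\tilde{H}_{j-p-1}(\Delta_W;K)$. Summing over all $W$ with $|W|=j$ gives $\beta_{p,j}(S/I_\Delta)=\sum_{|W|=j}\dim_K\tilde{H}_{j-p-1}(\Delta_W;K)$.

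Finally, since $I_\Delta\subseteq\mathfrak{m}$, a minimal graded free resolution $\cdots\to F_1\to F_0=S\to S/I_\Delta\to 0$ yields, after deleting $F_0$, a minimal graded free resolution $\cdots\to F_2\to F_1\to I_\Delta\to 0$; hence $\beta^K_{i,j}(I_\Delta)=\beta_{i+1,j}(S/I_\Delta)$ for all $i\ge 0$, and substituting the formula just obtained gives $\beta^K_{i,j}(I_\Delta)=\sum_{|W|=j}\dim_K\tilde{H}_{j-i-2}(\Delta_W;K)$, as claimed. The one step that needs genuine care is the identification in the third paragraph, where the Koszul signs must be matched with the simplicial orientation so that the strand is literally the augmented reduced cochain complex of $\Delta_W$; by contrast the acyclicity of non-square-free strands and the shift from $S/I_\Delta$ to $I_\Delta$ are routine.
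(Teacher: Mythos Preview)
Your proof is correct and follows the standard Koszul-complex argument for Hochster's formula (as in Bruns--Herzog or Miller--Sturmfels). Note, however, that the paper does not supply its own proof of this statement: it is quoted in the Preliminaries as a known result and used throughout without justification, so there is no paper proof to compare against. Your write-up would serve as a perfectly adequate self-contained justification; the only place requiring care, as you yourself flag, is the sign-compatible identification of the Koszul strand at $\mathbf{e}_W$ with the augmented reduced cochain complex of $\Delta_W$, which can be made precise by fixing the linear order on $[n]$ once and for all.
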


The following theorem, extends the well-known Herzog-Kuhl equations~\cite{HerzogKuhl} in the case of $\beta_{i, d_{i+1}} (M)=0$ for all $i \leq 0$.

\begin{thm}[\cite{D}] \label{dif-herzog-kuhl}
Let be $M$ a $\NN$-graded $S$-module, $\rho$ its projective dimension and $\text{{\rm \textbf{d}}} = ( d_0 < d_1 < \dots < d_\rho < d_{\rho+1} ) \in \NN^{\rho+2}$, such that $M$ has a free resolution with the following form:
\begin{equation*}
\begin{split}
0 & \rightarrow S(-d_{\rho+1})^{\beta_{\rho,d_{\rho+1}}} \oplus S(-d_{\rho})^{\beta_{\rho,d_{\rho}}} \rightarrow S(-d_{\rho})^{\beta_{\rho-1,d_\rho}}\oplus S(-d_{\rho-1})^{\beta_{\rho-1,d_{\rho-1}}}\rightarrow \\
& \rightarrow \dots \rightarrow S(-d_{2})^{\beta_{2,d_2}}\oplus S(-d_{1})^{\beta_{1,d_{1}}}\rightarrow S(-d_{1})^{\beta_{0,d_{1}}}\oplus S(-d_{0})^{\beta_{0,d_{0}}} \rightarrow M \rightarrow 0.
\end{split}
\end{equation*}

For $1\leq i\leq \rho$, let $\beta_i'=\beta_{i,d_i}-\beta_{i-1,d_i}$. Then we have:
\begin{itemize}
 \item[\rm (i)] If ${\rm depth}(M)={\rm dim}\mbox{ }M\mbox{ }{\rm and}\mbox{ }\beta_{\rho,d_{\rho +1}}=0$, then for all $1\leq i\leq \rho$,
    $$\beta_i'=\beta_0(-1)^{i}\prod\limits_{k = 1 \atop k \neq i}^{\rho} \left( \frac{d_k-d_0}{d_k-d_i} \right).$$
 \item[\rm (ii)] If ${\rm depth}(M)={\rm dim}\mbox{ }M,\mbox{ }\mbox{ }\beta_{\rho,d_{\rho +1}}\neq 0\mbox{ }{\rm and}\mbox{ }d_0=0$, then for all $1\leq i\leq \rho+1$,
$$\beta_i'=(-1)^{i-1}\frac{{\beta_0} \left( \prod\limits_{k=1 \atop k\neq i}^{\rho+1}d_k \right)- (\rho!) e(M)}{\prod\limits_{k=1 \atop k\neq i}^{\rho+1}(d_k-d_i)}.$$
 \item[\rm (iii)] If ${\rm depth}(M)={\rm dim}\mbox{ }M-1,\mbox{ }\beta_{\rho,d_{\rho +1}}=0\mbox{ }{\rm and}\mbox{ }d_0=0$, then for all $1\leq i\leq \rho$,
$$\beta_i'=(-1)^{i-1}\frac{{\beta_0} \left( \prod\limits_{k=1 \atop k\neq i}^{\rho}d_k \right)-(\rho-1)!e(M)}{\prod\limits_{k=1 \atop k\neq i}^{\rho}(d_k-d_i)}.$$
\end{itemize}
\end{thm}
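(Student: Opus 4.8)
The statement is a quantitative refinement of the Herzog--Kuhl equations, and the plan is to prove it by the same Hilbert-series device, adapted to the mildly non-pure shape of the given resolution. Write $S=K[x_1,\dots,x_n]$ and let $H_M(t)=\sum_j(\dim_K M_j)t^j$ be the Hilbert series of $M$. Any finite free resolution computes the numerator of $H_M$, so from the resolution in the statement $H_M(t)=K_M(t)/(1-t)^n$ with $K_M(t)=\sum_{i=0}^{\rho}(-1)^i\bigl(\beta_{i,d_i}t^{d_i}+\beta_{i,d_{i+1}}t^{d_{i+1}}\bigr)$. Collecting the coefficient of each power $t^{d_j}$ and using $\beta_{\rho+1,d_{\rho+1}}=0$, this rewrites as $K_M(t)=\beta_0t^{d_0}+\sum_{j\ge1}(-1)^j\beta_j'\,t^{d_j}$, where $\beta_0:=\beta_{0,d_0}$ and the sum runs to $j=\rho$ in cases (i) and (iii) and to $j=\rho+1$ in case (ii) (the last term coming from the identity $\beta_{\rho,d_{\rho+1}}=-\beta_{\rho+1}'$). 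Thus the whole problem is to pin down the coefficients $c_j$ of $K_M(t)$, where $c_0=\beta_0$ and $c_j=(-1)^j\beta_j'$.

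Next I would bring in the general shape of a graded Hilbert series: $H_M(t)=Q_M(t)/(1-t)^{\dim M}$ for a polynomial $Q_M$ with $Q_M(1)=e(M)>0$. Comparing with the previous display gives $K_M(t)=Q_M(t)(1-t)^{c}$ with $c:=n-\dim M$; hence $(1-t)^c\mid K_M(t)$ and the quotient takes the value $e(M)$ at $t=1$. To read off $c$ I would use Theorem~\ref{AuslanderBuchsbaum}: $\depth M=n-\rho$, so $c=\rho$ in cases (i) and (ii) (where $\depth M=\dim M$) and $c=\rho-1$ in case (iii).

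Now I would convert these facts into linear conditions on the $c_j$. Divisibility by $(1-t)^c$ is equivalent to the vanishing at $t=1$ of $K_M$ and of its first $c-1$ derivatives, i.e.\ to $\sum_j c_jP(d_j)=0$ for every polynomial $P$ with $\deg P\le c-1$; and the substitution $t=1-u$ turns $Q_M(1)=e(M)$ into $\sum_j c_j\binom{d_j}{c}=(-1)^ce(M)$. In case (i) this is a homogeneous system of $\rho$ equations in the $\rho+1$ quantities $c_0,\dots,c_\rho$; the coefficient matrix is Vandermonde of rank $\rho$, so the solution space is one-dimensional and spanned by the vector with $j$-th entry $1/\prod_{k\ne j}(d_j-d_k)$ (the $\rho$-th divided-difference kernel), the remaining scalar being fixed by $c_0=\beta_0$. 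In cases (ii) and (iii) there is one divisibility equation fewer but the multiplicity equation is now needed; here I would feed the interpolation polynomial $P_i(x)=\prod_{k\ne 0,i}(x-d_k)$ (of degree exactly $c$, the product over the relevant index set) into the combined identity ``$\sum_j c_jP(d_j)=(\text{leading coefficient of }P)\cdot c!\,(-1)^ce(M)$ for $\deg P\le c$'', which kills all terms on the left except $c_0P_i(d_0)+c_iP_i(d_i)$ and so solves for $c_i$. In each case, substituting $c_0=\beta_0$, $c_i=(-1)^i\beta_i'$, and $d_0=0$ where applicable, and telescoping the products $\prod(d_k-d_0)$, $\prod(d_k-d_i)$ and $\prod d_k$, produces the stated closed forms.

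The one ingredient that is not pure bookkeeping is the interpolation identity: for distinct $a_0,\dots,a_m$ and $\Phi(x)=\prod_l(x-a_l)$, the sum $\sum_j P(a_j)/\Phi'(a_j)$ equals $0$ when $\deg P<m$ and the leading coefficient of $P$ when $\deg P=m$. I would either cite it (it is Lagrange's interpolation remainder, equivalently the $m$-th divided difference) or prove it in two lines by comparing the coefficient of $x^m$ on the two sides of Lagrange's formula. I do not expect a genuine obstacle; the real work is (a) treating the three cases uniformly --- the only structural differences being the exponent $c$ of $(1-t)$, whether the multiplicity equation is invoked, and the absorption of $\beta_{\rho,d_{\rho+1}}$ as a $(\rho+1)$-st coefficient --- and (b) the sign- and product-manipulations needed to match the normalization displayed in the statement.
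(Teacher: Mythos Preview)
The paper does not prove this theorem at all; it is quoted verbatim from the preprint~\cite{D} (De-Alba-Casillas and Morales) and used later as a black box in Theorem~\ref{Resolution of Minimal to linearity}. So there is no ``paper's own proof'' to compare against.

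That said, your Hilbert-series argument is the standard and correct route to Herzog--K\"uhl--type equations, and is almost certainly what \cite{D} does as well: read the numerator $K_M(t)$ off the resolution, use Auslander--Buchsbaum to identify the power of $(1-t)$ dividing it, and solve the resulting Vandermonde/Lagrange system for the coefficients $c_j=(-1)^j\beta_j'$. The only caution is the final sign bookkeeping: if you carry your computation for case~(i) to the end (e.g.\ test it on the Koszul resolution of the residue field, where $d_i=i$ and $\beta_i'=\binom{n}{i}$), you will find the displayed formula in the statement should carry $(-1)^{i-1}$ rather than $(-1)^{i}$ --- a transcription slip in the statement, not a flaw in your method.
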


\section{Simplicial Complexes $\Delta$ with ${\rm indeg}\, (I_\Delta) \geq 1+ \dim \Delta$}\label{Section3}

As we shall see later, the ideals which are minimal to linearity are located in the class of square-free monomial ideals $I_\Delta$, with ${\rm indeg}\, (I_\Delta) = 1+ \dim \Delta$ (see Definition~\ref{Definition of minimal to linearity}). For square-free monomial ideal $I$ with ${\rm indeg}\, (I) \geq d$, we have the following proposition.

\begin{prop} \label{Zero Reduced Homology of indeg=1+dim}
Let $\Delta$ be a simplicial complex on $[n]$ and $d$ be an integer such that ${\rm indeg}\, (I_\Delta) \geq d$. Then,
\begin{itemize}
\item[\rm (i)] $\tilde{H}_i(\Delta_W; K) =0$, for all $i<d-2$ and $W \subset [n]$.
\item[\rm (ii)] If $\beta_{i,j}(I_\Delta) \neq 0$, then, $1 \leq j \leq n$ and $d \leq j-i \leq \dim \Delta + 2$.
\end{itemize}
\end{prop}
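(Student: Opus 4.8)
The plan is to translate the hypothesis into a statement about the $(d-2)$-skeleton, to deduce (i) by comparing the augmented chain complex of $\Delta_W$ with that of a simplex, and then to read (ii) off Hochster's formula. First I would record what the hypothesis means: since $I_\Delta$ is generated by the squarefree monomials indexed by the non-faces of $\Delta$, the condition ${\rm indeg}\,(I_\Delta)\ge d$ says precisely that $\Delta$ has no non-face of size $\le d-1$, i.e.\ every $F\subseteq[n]$ with $|F|\le d-1$ is a face of $\Delta$; equivalently, $\Delta$ contains the full $(d-2)$-skeleton of the simplex on $[n]$. Restricting to any $W\subseteq[n]$, it follows that $\Delta_W$ likewise contains every subset of $W$ of size $\le d-1$.

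For (i) I would fix a nonempty $W\subseteq[n]$ and let $\Sigma_W$ denote the full simplex on $W$ (all subsets of $W$). Since $\C_i(\Gamma)=\bigoplus_{F\in\Gamma,\ \dim F=i}KF$ and, by the previous paragraph, $\Delta_W$ and $\Sigma_W$ have exactly the same faces of dimension $\le d-2$, while the differentials $\partial_i$ are given by the same formula in both, the augmented oriented chain complexes $\tilde{\mathscr{C}}(\Delta_W)$ and $\tilde{\mathscr{C}}(\Sigma_W)$ coincide in all homological degrees $\le d-2$; the maps $\partial_i$ with $i\le d-2$ agree as well, the first one that may differ being $\partial_{d-1}:\C_{d-1}\to\C_{d-2}$, whose source lies outside this range. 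Because $\tilde{H}_i$ is computed from the three consecutive terms $\C_{i-1},\C_i,\C_{i+1}$ and the two maps between them, it follows that $\tilde{H}_i(\Delta_W;K)=\tilde{H}_i(\Sigma_W;K)$ whenever $i+1\le d-2$, that is, for every $i<d-2$. A (nonempty) simplex is contractible, so $\tilde{H}_i(\Sigma_W;K)=0$, and hence $\tilde{H}_i(\Delta_W;K)=0$ for all $i<d-2$. (If $d\le1$ there is nothing to prove, since then $i<d-2$ forces $i<-1$, where $\tilde{H}_i$ vanishes anyway.)

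For (ii) I would apply Hochster's formula (Theorem~\ref{Hochster Formula}): if $\beta_{i,j}(I_\Delta)\ne0$, then $\tilde{H}_{j-i-2}(\Delta_W;K)\ne0$ for some $W\subseteq[n]$ with $|W|=j$. At once $j\le n$; moreover $j\ne0$, since $W=\varnothing$ would give $\tilde{H}_{j-i-2}(\{\varnothing\};K)\ne0$, forcing $j-i-2=-1$ and so $i=-1$, which is impossible. Thus $1\le j\le n$ and $W$ is nonempty. Part (i) applied to this $W$ gives $\tilde{H}_k(\Delta_W;K)=0$ for every $k<d-2$, so nonvanishing of $\tilde{H}_{j-i-2}(\Delta_W;K)$ forces $j-i-2\ge d-2$, i.e.\ $j-i\ge d$; and since reduced homology vanishes above the dimension, $j-i-2\le\dim\Delta_W\le\dim\Delta$, i.e.\ $j-i\le\dim\Delta+2$. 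This proves $d\le j-i\le\dim\Delta+2$.

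The one step with genuine content is the chain-level comparison in (i): once one sees that possessing the full $(d-2)$-skeleton makes $\Delta_W$ homologically indistinguishable from a simplex through degree $d-2$, the vanishing below degree $d-2$ is automatic, and (ii) reduces to bookkeeping with Hochster's formula and the elementary ranges $\tilde{H}_k(\Gamma)=0$ for $k<-1$ and for $k>\dim\Gamma$. The points worth care are the off-by-one — $\tilde{H}_{d-2}$ itself need not vanish, as the $(d-2)$-skeleton shows — and the degenerate cases $W=\varnothing$, $d\le1$, $I_\Delta=0$, all of which are harmless.
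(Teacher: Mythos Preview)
Your proof is correct and follows essentially the same approach as the paper's. Both arguments hinge on the observation that ${\rm indeg}(I_\Delta)\ge d$ forces $\Delta$ (and each $\Delta_W$) to contain the full $(d-2)$-skeleton of the ambient simplex, so that the augmented chain complex agrees with an acyclic one through degree $d-2$; the paper phrases this as a comparison with the $(d-2)$-skeleton $\Delta^{(d-2)}$, while you compare directly with the full simplex $\Sigma_W$, but these are the same idea, and part (ii) is handled identically via Hochster's formula in both.
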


\begin{proof}
(i) Let $\dim \Delta=r$ and
\begin{align*}
\begin{CD}
\tilde{\mathscr{C}}(\Delta):\quad 0 @>>>\C_r @>\partial_{r}>> \cdots @>\partial_{d+1}>> \C_d@>\partial_{d}>> \C_{d-1} \\ @>\partial_{d-1}>> \C_{d-2} @>\partial_{d-2}>> \cdots
@>\partial_{1}>> \C_{0} @>\partial_{0}>> \C_{-1} @>>> 0
\end{CD}
\end{align*}
be the augmented chain complex of $\Delta$ and $\Delta^{(d-2)}$ be $(d-2)$-skeleton of $\Delta$. That is $\Delta^{(d-2)} = \{ F \in \Delta, \, \, \dim F \leq d-2 \}$. Then the augmented chain complex of $\Delta^{(d-2)}$ is:
$$\begin{CD}
\tilde{\mathscr{C}}(\Delta^{(d-2)}):\quad  0 \to \C_{d-2} @>{\partial_{d-2}}>>  \cdots @>>> \C_1 @>{\partial_{1}}>> \C_0 @>{\partial_{0}}>> \C_{-1} \longrightarrow 0.
\end{CD}$$
So that $\tilde{H}_i(\Delta;K)=\tilde{H}_i(\Delta^{(d-2)};K)$ for $i < d-2$. Since, ${\rm indeg}\, (I_\Delta) \geq d$, the facet set of the complex $\Delta^{(d-2)}$ is all $(d-1)$-subsets of $[n]$. Hence $\tilde{H}_i(\Delta;K)=\tilde{H}_i(\Delta^{(d-2)};K)=0$ for $i < d-2$.

Moreover, if $W \subset [n]$, then all $(d-1)$-subsets of $W$ is again in $\Delta_W$. This implies that ${\rm indeg}\, (I_{\Delta_W}) \geq d$. Hence by what we have already proved, we conclude that $\tilde{H}_i(\Delta_W; K) =0$ for all $i< d-2$. This completes the proof.

(ii) If $\beta_{i,j}(I_\Delta) \neq 0$, then by Theorem~\ref{Hochster Formula}, there exists $W \subset [n]$ with $|W|=j$ and $\tilde{H}_{j-i-2}(\Delta_W; K) \neq 0$. So that, $1 \leq j=|W| \leq n$ and $j-i-2 \leq \dim \Delta$. Moreover,  by  part (i), we have $j-i-2 \geq d-2$.
\end{proof}

\begin{rem} \rm
Let $\Delta$ be a $(d-1)$-dimensional simplicial comlex such that ${\rm indeg}\, (I_\Delta) \geq d$. The main property of $\Delta$ is that it contains all faces of dimension $d-2$. Hence $\Delta$ contains all faces of dimension $-1,0,\ldots, d-2$. So that
\begin{equation}\label{f-Vector with indeg=1+dim}
f_i={n \choose i+1}, \quad i=-1, \ldots, d-2.
\end{equation}
\end{rem}
For a monomial ideal $I$, let $\mu (I)$ denotes the number of the minimal generators of $I$ and $e(I)$ denotes the multiplicity of $I$. As a consequence of Proposition~\ref{Zero Reduced Homology of indeg=1+dim}, we have:

\begin{cor} \label{Homology of ideals with indeg I = 1+dim}
Let $\Delta$ be a $(d-1)$-dimensional simplicial complex on $[n]$ such that ${\rm indeg} \,(I_\Delta) \geq d$. Then,
\begin{align}
\dim_K \tilde{H}_{d-2}(\Delta;K) - \dim_K \tilde{H}_{d-1}(\Delta;K) = \sum\limits_{i=0}^{d-1} (-1)^{d+i-1}{n \choose i} - e(S/I_\Delta).
\end{align}
\end{cor}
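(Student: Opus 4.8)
The plan is to combine the Euler characteristic identity \eqref{Euler characteristic in terms of Homology} with the vanishing statement of Proposition~\ref{Zero Reduced Homology of indeg=1+dim}(i) and the explicit values of the low-dimensional face numbers recorded in \eqref{f-Vector with indeg=1+dim}. Since $\Delta$ is $(d-1)$-dimensional with ${\rm indeg}\,(I_\Delta) \geq d$, Proposition~\ref{Zero Reduced Homology of indeg=1+dim}(i) (applied with $W = [n]$) gives $\tilde{H}_i(\Delta;K) = 0$ for all $i < d-2$, so the alternating sum on the right-hand side of \eqref{Euler characteristic in terms of Homology} collapses to just the two terms in degrees $d-2$ and $d-1$:
\[
-1 + \chi(\Delta) = (-1)^{d-2}\dim_K \tilde{H}_{d-2}(\Delta;K) + (-1)^{d-1}\dim_K \tilde{H}_{d-1}(\Delta;K).
\]
Multiplying through by $(-1)^{d-2} = (-1)^d$ isolates $\dim_K \tilde{H}_{d-2}(\Delta;K) - \dim_K \tilde{H}_{d-1}(\Delta;K)$ on one side, equal to $(-1)^d(-1+\chi(\Delta))$.

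Next I would expand $\chi(\Delta) = \sum_{i=0}^{d-1} (-1)^i f_i$ and split off the faces of dimension $\leq d-2$ from the top-dimensional faces. By \eqref{f-Vector with indeg=1+dim} we have $f_i = \binom{n}{i+1}$ for $i = -1, \ldots, d-2$, while the facets of $\Delta$ (the faces of dimension $d-1$) are precisely the minimal non-generators, so their number $f_{d-1}$ can be written via $\mu(I_\Delta)$: since $I_\Delta$ is generated by the square-free monomials of degree $d$ corresponding to $d$-subsets \emph{not} in $\Delta$, we get $f_{d-1} = \binom{n}{d} - \mu(I_\Delta)$. On the other hand, $K[\Delta] = S/I_\Delta$ has dimension $1 + \dim \Delta = d$, and because $\Delta$ is pure-looking enough in top degree (or more carefully, because the multiplicity is computed from the number of $d$-subsets that are faces, equivalently the number of minimal primes $P_{\bar F}$ of codimension $n-d$), one has $e(S/I_\Delta) = f_{d-1}$. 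Substituting and reindexing the binomial sum $\sum_{i=-1}^{d-2}(-1)^i\binom{n}{i+1}$ into the form $\sum_{i=0}^{d-1}(-1)^{d+i-1}\binom{n}{i}$ after multiplying by the overall sign $(-1)^d$ then yields exactly the claimed identity.

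The routine part is the bookkeeping of signs and the re-indexing of the binomial sum; the only genuine point requiring care — and the main obstacle — is justifying the identification $e(S/I_\Delta) = f_{d-1}$, i.e. that the Hilbert-series multiplicity of the Stanley–Reisner ring equals the number of top-dimensional faces. This follows from the standard fact that $e(K[\Delta])$ counts the facets of maximal dimension $\dim\Delta$, together with the observation that \emph{every} facet of $\Delta$ has dimension $d-1$: indeed ${\rm indeg}\,(I_\Delta)\geq d$ forces every $(d-1)$-subset of $[n]$ to be a face, so $\Delta$ has no facet of dimension $< d-1$, hence $\Delta$ is pure of dimension $d-1$ and all $f_{d-1}$ facets contribute to the multiplicity. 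With that in hand, combining the two displayed computations gives the corollary.
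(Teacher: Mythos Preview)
Your proposal is correct and follows essentially the same route as the paper: both combine the Euler-characteristic identity \eqref{Euler characteristic in terms of Homology}, the vanishing of low homology from Proposition~\ref{Zero Reduced Homology of indeg=1+dim}(i), the face-count formula \eqref{f-Vector with indeg=1+dim}, and the identification $e(S/I_\Delta)=f_{d-1}$. The paper simply asserts this last equality, whereas you supply the purity argument; your extra justification is fine, though the side remark $f_{d-1}=\binom{n}{d}-\mu(I_\Delta)$ is not needed for the proof and is only valid when all minimal generators have degree exactly $d$.
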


\begin{proof}
Using (\ref{Euler characteristic in terms of Homology}), Proposition~\ref{Zero Reduced Homology of indeg=1+dim} and (\ref{f-Vector with indeg=1+dim}), we have:
\begin{equation*}
(-1)^{d-2} \dim_K \tilde{H}_{d-2}(\Delta;K) + (-1)^{d-1} \dim_K \tilde{H}_{d-1}(\Delta;K) = -1 + (-1)^{d-1}f_{d-1}
+ \sum\limits_{i=0}^{d-2} (-1)^i{n \choose i+1}.
\end{equation*}
Since $e(S/I_\Delta)=f_{d-1}$, we get the conclusion.
\end{proof}

The following theorems, extend some results of Terai and Yoshida (c.f. {\cite{Terai2}}).


\begin{thm}\label{betti, reg and C-M with indeg=1+dim}
Let $\Delta$ be a $(d-1)$-dimensional simplicial complex on $[n]$ such that ${\rm indeg}\, (I_\Delta) \geq d$. Then,
\begin{itemize}
\item[\rm (i)] If $\beta_{i,j} (I_\Delta) \neq 0$, then $1 \leq j \leq n$ and $d \leq j-i \leq d+1$.
\item[\rm (ii)] $d \leq \reg (I_\Delta) \leq d+1.$
\item[\rm (iii)] ${\rm indeg}\, I_\Delta \leq d+1$ and equality holds if and only if  $I_\Delta$ has $(d+1)$-linear resolution.
\item[\rm (iv)] $(n-d)-1 \leq \projdim (I_\Delta) \leq n-d$.
\end{itemize}
\end{thm}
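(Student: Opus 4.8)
The plan is to read off parts~(i) and~(ii) directly from Proposition~\ref{Zero Reduced Homology of indeg=1+dim}, to bootstrap~(iii) from~(ii), and to combine part~(i) with the Auslander--Buchsbaum formula (Theorem~\ref{AuslanderBuchsbaum}) for~(iv). Throughout one uses only that $\dim \Delta = d-1$, so that $\dim\Delta + 2 = d+1$ and $\dim K[\Delta] = 1+\dim\Delta = d$. For~(i), apply Proposition~\ref{Zero Reduced Homology of indeg=1+dim}(ii): if $\beta_{i,j}(I_\Delta)\neq 0$ then $1\le j\le n$ and $d\le j-i\le \dim\Delta+2=d+1$. Part~(ii) is then immediate, since by definition $\reg (I_\Delta)=\sup\{\,j-i:\beta_{i,j}(I_\Delta)\neq 0\,\}\le d+1$ by~(i), while $\beta_{0,\,m}(I_\Delta)\neq 0$ for $m={\rm indeg}\,(I_\Delta)$ forces $\reg (I_\Delta)\ge {\rm indeg}\,(I_\Delta)\ge d$.

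For~(iii), the first point is that ${\rm indeg}\,(I_\Delta)\le d+1$. Since $\dim\Delta=d-1$, no subset of $[n]$ of size $d+1$ is a face of $\Delta$; assuming $n\ge d+1$ (otherwise $\Delta$ is the full simplex and $I_\Delta=0$, a degenerate case outside the hypothesis), pick any such subset and any minimal non-face contained in it: this is a minimal generator of $I_\Delta$ of degree at most $d+1$. Hence $d\le{\rm indeg}\,(I_\Delta)\le d+1$. Now if ${\rm indeg}\,(I_\Delta)=d+1$, then by~(ii) together with the inequality $\reg (I_\Delta)\ge {\rm indeg}\,(I_\Delta)$ one gets $\reg (I_\Delta)=d+1={\rm indeg}\,(I_\Delta)$, which is exactly the assertion that $I_\Delta$ has a $(d+1)$-linear resolution. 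Conversely, a $(d+1)$-linear resolution means $\reg (I_\Delta)={\rm indeg}\,(I_\Delta)=d+1$ by the very definition, so equality holds.

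For~(iv), the upper bound comes again from~(i): if $\beta_{i,j}(I_\Delta)\neq 0$, then $i=j-(j-i)\le n-d$, whence $\projdim (I_\Delta)\le n-d$. For the lower bound I would pass to $S/I_\Delta$, which has finite projective dimension over the polynomial ring $S$ and satisfies $\projdim (S/I_\Delta)=\projdim (I_\Delta)+1$ (via the short exact sequence $0\to I_\Delta\to S\to S/I_\Delta\to 0$); the Auslander--Buchsbaum formula then gives $\projdim (S/I_\Delta)=n-\depth (S/I_\Delta)\ge n-\dim (S/I_\Delta)=n-(1+\dim\Delta)=n-d$, so $\projdim (I_\Delta)\ge (n-d)-1$.

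I do not expect a genuine obstacle: the statement is essentially a repackaging of the homological vanishing already established in Proposition~\ref{Zero Reduced Homology of indeg=1+dim}, together with the standard Auslander--Buchsbaum identity and the elementary depth-versus-dimension bound. The only points that need a word of care are the degenerate case $n=d$ (i.e.\ $I_\Delta=0$) when proving ${\rm indeg}\,(I_\Delta)\le d+1$ in~(iii), and the routine bookkeeping relating the projective dimensions (and Betti numbers) of $I_\Delta$ and of $S/I_\Delta$.
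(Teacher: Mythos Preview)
Your proposal is correct and follows essentially the same approach as the paper: parts~(i) and~(ii) are read off from Proposition~\ref{Zero Reduced Homology of indeg=1+dim}, part~(iii) is bootstrapped from~(ii), and part~(iv) combines the bound from~(i) with Auslander--Buchsbaum. The only cosmetic difference is in the first claim of~(iii): the paper observes that any minimal generator of $I_\Delta$ contributes a nonzero $\beta_{0,j}$, so $j\le d+1$ by~(i), whereas you argue directly from $\dim\Delta=d-1$ that some non-face of size $\le d+1$ exists; both yield ${\rm indeg}\,(I_\Delta)\le d+1$ immediately.
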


\begin{proof}
(i) If $\beta_{i,j} (I_\Delta) \neq 0$, then by Theorem~\ref{Hochster Formula}, there exists $\emptyset \neq W \subset [n]$, such that $|W|=j$ and $\tilde{H}_{j-i-2}(\Delta_W; K) \neq 0$. So that $1 \leq j \leq n$ and by Proposition~\ref{Zero Reduced Homology of indeg=1+dim}, $d-2 \leq j-i-2 \leq d-1$. That is, $d \leq j-i \leq d+1$.

(ii) By part (i), we have
\begin{equation*}
d \leq  {\rm indeg}\,(I_\Delta) \leq \reg (I_\Delta) = \max \{j-i \colon \quad \beta_{i,j} \neq 0\} \leq d+1.
\end{equation*}

(iii) If $x_{i_1}\cdots x_{i_j} \in I_\Delta$, then $\beta_{0,j} \neq 0$. So that by (i), $j \leq d+1$. In particular, ${\rm indeg}\,(I_\Delta) \leq d+1$.

If ${\rm indeg}\,(I_\Delta) = d+1$, then $ \reg(I_\Delta) \geq d+1$ and by (ii), $I_\Delta$ has $(d+1)$-linear resolution. On the other hand, if $I_\Delta$ has $(d+1)$-linear resolution, then each generator has degree $d+1$. So that ${\rm indeg}\,(I_\Delta) = d+1$.

(iv) Let $\rho =\projdim (I_\Delta)$. By Theorem~\ref{AuslanderBuchsbaum},
\begin{equation*}
\rho + 1 = \projdim \frac{S}{I_\Delta} = n- \depth \frac{S}{I_\Delta} \geq n-\dim \frac{S}{I_\Delta} = n-d.
\end{equation*}
Hence $\rho \geq (n-d)-1$.

On the other hand, $\beta_\rho (I_\Delta) \neq 0$. Hence, there exists $1 \leq j \leq n$, such that $\beta_{\rho,j} \neq 0$. So, by (i), $j-\rho \geq d$. This implies that $\rho \leq j-d \leq n-d$.
\end{proof}

\begin{thm} \label{Cohen_macaulayness with indeg=1+dim}
Let $S=K[x_1, \ldots, x_n]$ be the polynomial ring over a field $K$ and $\Delta$ be a $(d-1)$-dimensional simplicial complex on $[n]$ such that ${\rm indeg}\, (I_\Delta) \geq d$. Then, $S/I_\Delta$ is Cohen-Macaulay if and only if  $\tilde{H}_{d-2}(\Delta; K) =0$.
\end{thm}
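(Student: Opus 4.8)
The plan is to reduce Cohen-Macaulayness to the vanishing of a single extremal graded Betti number of $I_\Delta$, and then identify that Betti number with $\dim_K\tilde{H}_{d-2}(\Delta;K)$ via Hochster's formula. Since $\dim S/I_\Delta = 1+\dim\Delta = d$ and depth never exceeds dimension, $S/I_\Delta$ is Cohen-Macaulay if and only if $\depth(S/I_\Delta)=d$, which by the Auslander--Buchsbaum formula (Theorem~\ref{AuslanderBuchsbaum}) is equivalent to $\projdim(S/I_\Delta)=n-d$, i.e. to $\projdim(I_\Delta)=n-d-1$. By Theorem~\ref{betti, reg and C-M with indeg=1+dim}(iv) we already know $(n-d)-1\le\projdim(I_\Delta)\le n-d$, so $S/I_\Delta$ is Cohen-Macaulay if and only if $\projdim(I_\Delta)\ne n-d$, equivalently if and only if $\beta_{n-d,j}(I_\Delta)=0$ for all $j$.

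Next I would pin down the only internal degree $j$ that can occur in homological position $n-d$. By Theorem~\ref{betti, reg and C-M with indeg=1+dim}(i), $\beta_{n-d,j}(I_\Delta)\ne 0$ forces $d\le j-(n-d)\le d+1$, hence $n\le j\le n+1$; combined with the a priori bound $j\le n$ this leaves only $j=n$. Therefore $\projdim(I_\Delta)=n-d$ if and only if $\beta_{n-d,\,n}(I_\Delta)\ne 0$, and consequently $S/I_\Delta$ is Cohen-Macaulay if and only if $\beta_{n-d,\,n}(I_\Delta)=0$.

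Finally, I would evaluate $\beta_{n-d,\,n}(I_\Delta)$ by Hochster's formula (Theorem~\ref{Hochster Formula}): the only $W\subseteq[n]$ with $|W|=n$ is $W=[n]$, so
$$\beta_{n-d,\,n}(I_\Delta)=\dim_K\tilde{H}_{\,n-(n-d)-2}(\Delta_{[n]};K)=\dim_K\tilde{H}_{d-2}(\Delta;K),$$
and the desired equivalence follows by combining this with the previous paragraph. The proof is thus a direct assembly of Theorems~\ref{AuslanderBuchsbaum}, \ref{betti, reg and C-M with indeg=1+dim} and \ref{Hochster Formula}, and no step is a genuine obstacle; the only point that demands care is the bookkeeping that collapses ``$\beta_{n-d,j}(I_\Delta)=0$ for all $j$'' into the single equality $\beta_{n-d,\,n}(I_\Delta)=0$, which rests on the sharp degree estimate of Theorem~\ref{betti, reg and C-M with indeg=1+dim}(i) together with the bound $\projdim(I_\Delta)\le n-d$.
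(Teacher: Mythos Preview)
Your proof is correct and follows essentially the same route as the paper: both reduce Cohen--Macaulayness via Auslander--Buchsbaum and Theorem~\ref{betti, reg and C-M with indeg=1+dim}(iv) to whether the projective dimension attains its upper bound, then use Theorem~\ref{betti, reg and C-M with indeg=1+dim}(i) to pin the only possible extremal degree down to $j=n$, and finally invoke Hochster's formula to identify the relevant Betti number with $\dim_K\tilde H_{d-2}(\Delta;K)$. The only cosmetic difference is that the paper indexes everything in terms of $S/I_\Delta$ (so the critical Betti number is $\beta_{(n-d)+1,n}(S/I_\Delta)$) while you work directly with $I_\Delta$; the arguments are otherwise identical.
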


\begin{proof}
We know that $\dim S/I_\Delta=d$. So that Theorem~\ref{AuslanderBuchsbaum}, implies that
\begin{center}
$S/I_\Delta$ is Cohen-Macaulay if and only if $\projdim S/I_\Delta= (n-d)$.
\end{center}
In view of Theorem~\ref{betti, reg and C-M with indeg=1+dim}(iv), it is enough to prove that
\begin{equation*}
\projdim S/I_\Delta= (n-d)+1 \Longleftrightarrow \tilde{H}_{d-2}(\Delta; K) \neq 0.
\end{equation*}

($\Leftarrow$) If $\tilde{H}_{d-2}(\Delta; K) \neq 0$, then by Theorem~\ref{Hochster Formula}, $\beta_{(n-d)+1,n}(S/I_\Delta) \neq 0$. So that $\projdim S/I_\Delta \geq (n-d)+1$. Hence by Theorem~\ref{betti, reg and C-M with indeg=1+dim}(iv), $\projdim S/I_\Delta= (n-d)+1$.

($\Rightarrow$) If $\projdim S/I_\Delta= (n-d)+1$, then $\beta_{(n-d)+1}(S/I_\Delta) \neq 0$. Hence there exists $1 \leq j \leq n$ such that $\beta_{(n-d)+1,j}(S/I_\Delta) \neq 0$. Using Theorem~\ref{betti, reg and C-M with indeg=1+dim}(i), $j \geq n$. Hence $j=n$. Thus,
\begin{equation*}
\begin{split}
0 \neq \beta_{(n-d)+1} \left( \frac{S}{I_\Delta} \right) & = \sum\limits_{j=1}^{n} \beta_{(n-d)+1,j}\left( \frac{S}{I_\Delta} \right)\\
& = \beta_{(n-d)+1,n}\left( \frac{S}{I_\Delta} \right) \\
& = \dim \tilde{H}_{d-2}(\Delta; K). \qquad \text{(By Theorem~\ref{Hochster Formula})}
\end{split}
\end{equation*}
\end{proof}

Now, let $\Delta$ be a $(d-1)$-dimensional simplicial complex on $[n]$ such that ${\rm indeg}\,(I_\Delta) =d$. As a consequence of Theorem~\ref{betti, reg and C-M with indeg=1+dim}, we conclude that:

\begin{cor} \label{linearity with indeg=1+dim}
Let $\Delta$ be a $(d-1)$-dimensional simplicial complex on $[n]$ such that ${\rm indeg}\,(I_\Delta) =d$. Then, $I = I_\Delta$ has a $d$-linear resolution if and only if $\tilde{H}_{d-1}(\Delta;K) = 0$.
\end{cor}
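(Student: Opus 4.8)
The plan is to read everything off Theorem~\ref{betti, reg and C-M with indeg=1+dim} together with Proposition~\ref{Zero Reduced Homology of indeg=1+dim}, since the hypothesis ${\rm indeg}\,(I_\Delta)=d$ is a special case of ${\rm indeg}\,(I_\Delta)\geq d$. First I would recall the definition of a $d$-linear resolution: $I_\Delta$ has a $d$-linear resolution iff $\reg(I_\Delta)={\rm indeg}\,(I_\Delta)=d$, i.e. iff $\beta_{i,j}(I_\Delta)=0$ whenever $j-i\neq d$. By Theorem~\ref{betti, reg and C-M with indeg=1+dim}(i), the only nonzero Betti numbers already live in the two strands $j-i=d$ and $j-i=d+1$, so $I_\Delta$ fails to be $d$-linear precisely when some $\beta_{i,j}(I_\Delta)\neq 0$ with $j-i=d+1$.

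Next I would translate the condition "$\beta_{i,j}(I_\Delta)\neq 0$ for some $j-i=d+1$" into homology via Hochster's formula (Theorem~\ref{Hochster Formula}): this says $\tilde H_{j-i-2}(\Delta_W;K)=\tilde H_{d-1}(\Delta_W;K)\neq 0$ for some $W\subseteq[n]$. So it remains to see that $\tilde H_{d-1}(\Delta_W;K)\neq 0$ for some $W$ is equivalent to $\tilde H_{d-1}(\Delta;K)\neq 0$. The implication from $W=[n]$ is immediate. For the converse, the key point is that $\dim\Delta_W\leq\dim\Delta=d-1$ for every $W$, so $\tilde H_{d-1}(\Delta_W;K)$ is the top homology; and since ${\rm indeg}\,(I_{\Delta_W})\geq d$ as well (every $(d-1)$-subset of $W$ is a face of $\Delta_W$, as noted in the proof of Proposition~\ref{Zero Reduced Homology of indeg=1+dim}), one expects that among all $W$ with $\tilde H_{d-1}(\Delta_W;K)\neq 0$ the largest such, or simply $W=[n]$ itself, already carries nonzero top homology. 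Concretely: if $\tilde H_{d-1}(\Delta_W;K)\neq 0$ for some proper $W$, I would argue that $\beta_{|W|-d,|W|}(I_\Delta)\neq 0$ with $|W|-(|W|-d)=d$... wait — rather, I would instead directly use that $S/I_\Delta$ has projective dimension $(n-d)$ or $(n-d)+1$ by Theorem~\ref{betti, reg and C-M with indeg=1+dim}(iv), and invoke Theorem~\ref{Cohen_macaulayness with indeg=1+dim}: $S/I_\Delta$ is Cohen--Macaulay iff $\tilde H_{d-2}(\Delta;K)=0$. Combining, $I_\Delta$ is $(d+1)$-linear-strand-free iff $\projdim S/I_\Delta=n-d$ iff $S/I_\Delta$ is Cohen--Macaulay iff $\tilde H_{d-2}(\Delta;K)=0$ — but that is the $\tilde H_{d-2}$ statement, not $\tilde H_{d-1}$, so I must instead track which single $W$ records the top strand.

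The cleanest route, which I would take, is this. By Theorem~\ref{betti, reg and C-M with indeg=1+dim}(i), $\beta_{i,j}(I_\Delta)\neq 0$ with $j-i=d+1$ forces, via Hochster, some $W$ with $\tilde H_{d-1}(\Delta_W;K)\neq 0$ and $|W|=j$, $i=j-d-1$. Since $\dim\Delta_W\le d-1$, having nonzero $(d-1)$-st (top) homology means $\Delta_W$ has a nonzero $(d-1)$-cycle; I claim $W$ must then equal $[n]$ up to irrelevance, because any vertex not in $W$ can be added back without killing the cycle (the cycle still bounds nothing, as $\Delta$ also has no $d$-faces beyond those of $\Delta$... ) — the robust statement is that $\tilde H_{d-1}(\Delta_W;K)$ injects into $\tilde H_{d-1}(\Delta;K)$ when $\Delta$ has no faces of dimension $\geq d$ outside those already in $\Delta_W$'s "span", which holds since $\dim\Delta=d-1$. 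Hence $\tilde H_{d-1}(\Delta;K)\neq 0$. Conversely if $\tilde H_{d-1}(\Delta;K)\neq 0$, Hochster's formula with $W=[n]$ gives $\beta_{n-d-1,n}(I_\Delta)\neq 0$, a generator in the $(d+1)$-strand, so the resolution is not $d$-linear. Assembling the two directions yields the corollary. I expect the main obstacle to be the careful justification that top homology of $\Delta_W$ nonzero forces top homology of $\Delta$ nonzero — i.e. that the top-strand Betti number is detected on the full vertex set — which I would handle either by the Cohen--Macaulay equivalence route for $\tilde H_{d-2}$ run in parallel, or by a direct cycle-extension argument using $\dim\Delta=d-1$.
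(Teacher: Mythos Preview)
Your final argument is correct and matches the paper's proof: the paper also handles the non-linear direction by picking $W$ with $\tilde H_{d-1}(\Delta_W;K)\neq 0$ via Hochster and then invoking the inclusion $\tilde H_{d-1}(\Delta_W;K)\subset\tilde H_{d-1}(\Delta;K)$, which holds precisely because $\dim\Delta=d-1$ so there are no $d$-boundaries to kill the cycle. The Cohen--Macaulay detour and the claim that ``$W$ must equal $[n]$ up to irrelevance'' are unnecessary and should be dropped --- the injection of top homologies is the entire point, and you eventually state it correctly.
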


\begin{proof}
If $I$ has a $d$-linear resolution, then by Theorem~\ref{Hochster Formula}, we have:
\begin{equation*}
\begin{split}
0= \beta_{n-d-1,n} \left( I_\Delta \right) & = \dim_K \tilde{H}_{d-1}(\Delta;K). \\
\end{split}
\end{equation*}
Assume that $I$ does not have $d$-linear resolution, by Theorem~\ref{betti, reg and C-M with indeg=1+dim}(ii), we have:
\begin{equation*}
d+1 = \reg (I) = \max \{ j-i \colon \beta_{i,j} (I_\Delta) \neq 0 \}.
\end{equation*}
Let $d+1 = j_0-i_0$ and $\beta_{i_0j_0}(I_\Delta) \neq 0$. Then by Theorem~\ref{Hochster Formula}, there exists $W \subset [n]$ with $|W|=j_0$ and $\tilde{H}_{d-1}(\Delta_W; K) \neq 0$. This in particular implies that $\tilde{H}_{d-1}(\Delta; K) \neq 0$, for $\tilde{H}_{d-1}(\Delta_W; K) \subset \tilde{H}_{d-1}(\Delta; K)$.
\end{proof}

\section{Clutters and Clique Complexes}\label{Section4}
\begin{defn}[Clutter] \label{SC} \rm
A \textit{clutter} $\C$ on a vertex set $[n]$ is a set of subsets of $[n]$ (called \textit{circuits} of $\C$) such that if $e_1$ and $e_2$ are distinct circuits of $\C$ then $e_1 \nsubseteq e_2$.
A \textit{$d$-circuit} is a circuit consisting of exactly $d$ vertices, and a clutter is \textit{$d$-uniform} if every circuit has $d$ vertices.
A $(d-1)$-subset $e \subset [n]$ is called an \textit{submaximal circuit} of $\C$ if there exists $F \in \C$ such that $e \subset F$. The set of all submaximal circuit of $\C$ is denoted by ${\rm SC}(\C)$.
For $e \in {\rm SC}(\C)$, we denote by $\deg_\C(e)$, the \textit{degree} of $e$ to be
$$\deg_\C(e) = |\{F \in \C \colon \quad e \subset F \}|.$$
For a subset $W \subset [n]$, the \emph{induced subclutter} of $\C$ on $W$, $\C_W$ is a clutter with vertices $W$ and those circuits of $\C$ which their vertices are in $W$.
\end{defn}

For a non-empty clutter $\C$ on vertex set $[n]$, we define the ideal $I(\C)$, as follows:
$$I(\C) = \left(  \textbf{x}_T \colon \quad T \in \C \right),$$
where $\textbf{x}_T=x_{i_1}\cdots x_{i_t}$ for $T=\{i_1,\ldots,i_t\}$, and we define $I(\varnothing) = 0$.

Let $n\geq d$ be positive integers. We define $\C_{n,d}$, the \textit{maximal $d$-uniform clutter on $[n]$} as following:
$$\C_{n,d}=\{F \subset [n] \colon \quad |F|=d\}.$$
One can check that $I(\C_{n,d})$ has $d$-linear resolution (see also \cite[Example 2.12]{maar}).

If $\C$ is a $d$-uniform clutter on $[n]$, we define $\bar{\C}$, the \textit{complement} of $\C$, to be
$$\bar{\C}= \C_{n,d} \setminus \C = \{F \subset [n] \colon \quad |F|=d, \,F \notin \C\}.$$
Frequently in this paper, we take a $d$-uniform clutter $\C$ and we consider the square-free ideal $I=I(\bar{\C})$ in the polynomial ring $S=K[x_1, \ldots, x_n]$. We call $I= I(\bar{\C})$ the \textit{circuit ideal} of $\C$.

\begin{defn}[Clique Complex] \rm
Let $\C$ be a $d$-uniform clutter on $[n]$. A subset $V \subset [n]$ is called a \textit{clique} in $\C$, if all $d$-subsets of $V$ belongs to $\C$. Note that a subset of $[n]$ with less than $d$ elements is supposed to be a clique. The simplicial complex generated by cliques of $\C$ is called \textit{clique complex} of $\C$ and is denoted by $\Delta(\C)$.
\end{defn}

\begin{rem} \rm \label{initial degree of clique complex}
Let $\C$ be a $d$-uniform clutter on $[n]$ and $\Delta = \Delta(\C)$ be its clique complex. Then by our definition, all the subsets of $[n]$ with less than $d$ elements are also in $\Delta(\C)$. In particular, this implies that ${\rm indeg}\; I_{\Delta} \geq d$. So that by Proposition~\ref{Zero Reduced Homology of indeg=1+dim}, we have:
\begin{equation}
\tilde{H}_i(\Delta_W; K)=0, \qquad \text{for all }i<d-2 \text{ and } W \subset [n].
\end{equation}
\end{rem}

\begin{prop} \label{I_Delta = I (bar C)}
Let $\C$ be a $d$-uniform clutter on $[n]$ and $I=I(\bar{\C}) \subset K[x_1, \ldots, x_n]$ be the circuit ideal. Let $\Delta = \Delta(\C)$ be the clique complex of $\C$. Then,
\begin{itemize}
\item[\rm (i)] $\C = \mathcal{F}\left( \Delta^{(d-1)} \right)$;
\item[\rm (ii)] For all $u \in G(I_\Delta)$, $\deg (u) = d$;
\item[\rm (iii)] $I_\Delta = I$.
\end{itemize}
\end{prop}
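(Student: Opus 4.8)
The plan is to establish the three statements in order, since (iii) will follow almost immediately once (i) and (ii) are in place. Throughout write $\Delta = \Delta(\C)$ for the clique complex of the $d$-uniform clutter $\C$ on $[n]$.

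First I would prove (i), namely $\C = \mathcal{F}(\Delta^{(d-1)})$. Recall $\Delta^{(d-1)} = \{F \in \Delta : \dim F \le d-1\}$ consists of all faces of $\Delta$ with at most $d$ vertices, and its facets are the faces with exactly $d$ vertices (no face of $\Delta$ with fewer than $d$ vertices can be a facet of $\Delta^{(d-1)}$ because, by the definition of clique complex, every $(<d)$-subset of $[n]$ is a clique and hence a face, so it is always contained in a larger face of $\Delta^{(d-1)}$ as long as $n \ge d$). So it suffices to show: a $d$-subset $F \subset [n]$ is a face of $\Delta$ if and only if $F \in \C$. If $F \in \C$ then $F$ is a clique (all its $d$-subsets — just $F$ itself — lie in $\C$), hence $F \in \Delta$. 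Conversely, if $F$ is a $d$-subset lying in $\Delta$, then $F$ is contained in some clique $V$ of $\C$, and since $F$ is itself a $d$-subset of $V$, the definition of clique gives $F \in \C$. This proves (i).

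Next, (ii): every minimal monomial generator of $I_\Delta$ has degree $d$. The generators of $I_\Delta$ correspond to the minimal non-faces of $\Delta$. Since all subsets of $[n]$ of size $< d$ are faces of $\Delta$ (Remark on the initial degree of the clique complex), no non-face has size $< d$, so every minimal non-face has size $\ge d$; this already gives ${\rm indeg}(I_\Delta) \ge d$. For the reverse bound I must exhibit a non-face of size exactly $d$, or rather show every minimal non-face has size exactly $d$: suppose $W$ is a minimal non-face with $|W| = d' > d$. Then every proper subset of $W$ is a face of $\Delta$, in particular every $d$-subset of $W$ is a face of $\Delta$, hence (by the argument in (i)) lies in $\C$; thus $W$ is a clique of $\C$, so $W \in \Delta$, contradicting that $W$ is a non-face. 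Hence every minimal non-face of $\Delta$ has exactly $d$ vertices, i.e. every $u \in G(I_\Delta)$ has $\deg u = d$.

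Finally (iii): I want $I_\Delta = I(\bar{\C})$. By (ii), $I_\Delta$ is generated by the squarefree monomials $\mathbf{x}_W$ with $W$ a $d$-subset of $[n]$ that is a non-face of $\Delta$; by (the argument in) (i), a $d$-subset $W$ is a non-face of $\Delta$ precisely when $W \notin \C$, i.e. precisely when $W \in \bar{\C} = \C_{n,d} \setminus \C$. Therefore the generating sets of $I_\Delta$ and of $I(\bar{\C}) = (\mathbf{x}_T : T \in \bar{\C})$ coincide, so the two ideals are equal. I do not anticipate a serious obstacle here: the only point requiring a little care is the handling of facets of the skeleton $\Delta^{(d-1)}$ in (i) (making sure small faces are never facets, which uses $n \ge d$ and the convention that small subsets are cliques), and keeping the equivalence "$d$-subset is a face $\iff$ it lies in $\C$" clearly stated, since it is used in all three parts.
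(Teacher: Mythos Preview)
Your proof is correct and follows the same route as the paper's: the paper proves (ii) by the identical minimal-non-face contradiction and derives (iii) by matching the degree-$d$ generators, while (i) is dismissed as ``Clear.'' One minor wrinkle in your treatment of (i): the claim that no face with fewer than $d$ vertices can be a facet of $\Delta^{(d-1)}$ fails whenever some $(d-1)$-subset $e\subset[n]$ is not contained in any circuit of $\C$ --- then $e\in\Delta$, but no $d$-face of $\Delta$ contains it, so $e$ is a facet of $\Delta^{(d-1)}$. This is actually an imprecision in the paper's own statement of (i), and it is harmless for the rest of the argument: the only consequence of (i) used in (ii) and (iii) is the equivalence ``a $d$-subset lies in $\Delta$ iff it lies in $\C$,'' which you establish correctly.
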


\begin{proof}
We know that,
\begin{equation*}
I_\Delta = \bigcap\limits_{F \in \mathcal{F}(\Delta)} P_{\bar{F}}.
\end{equation*}
So that,
\begin{equation} \label{Local EQ 10}
\text{\rm \textbf{x}}_T \in I_\Delta \Longleftrightarrow T \cap \left( [n]\setminus F \right) \neq \varnothing, \text{ for all } F \in \mathcal{F}(\Delta).
\end{equation}

(i) Clear.

(ii) Let $u= \text{\rm \textbf{x}}_T \in G(I_\Delta)$. By Remark~\ref{initial degree of clique complex}, we know that $\deg (u) = |T| \geq d$.

If $\deg (u) = |T| > d$, then for all $d$-subset $T'$ of $T$, $\text{\rm \textbf{x}}_{T'} \notin I_\Delta$. This means that $T' \in \Delta$ for all $d$-subset $T'$ of $T$ (i.e. $T$ is a clique in $\C$). So that $T \in \Delta$ which is contradiction to the fact that $u= \text{\rm \textbf{x}}_T \in G(I_\Delta)$.

(iii) Let $T \in \bar{\C}$ and $\text{\rm \textbf{x}}_T \notin I_\Delta$. Then, by (\ref{Local EQ 10}), there exist $F \in \mathcal{F}(\Delta)$ such that $T \subset F$. Since $T$ is a $d$-subset of $F$, so $T \in \C$ which is contradiction. So that $I(\bar{\C}) \subset I_\Delta$.

For the converse, let $\text{\rm \textbf{x}}_T \in G(I_\Delta)$. Then, $T \notin \Delta$. Using part (i),  $T \notin \C$. Moreover, by (ii), we have $|T| = d$. Since $|T| = d$ and $T \notin \C$, one can say $T \in \bar{\C}$. This means that $I_\Delta \subset I(\bar{\C})$. This completes the proof.
\end{proof}

\begin{defn} \rm
A $d$-uniform clutter $\C$ is called \textit{decomposable} if there exists proper $d$-uniform subclutters $\C_1$ and $\C_2$ such that $\C= \C_1 \cup \C_2$ and either $V(\C_1) \cap V(\C_2)$ is a clique or ${\rm SC}(\C_1) \cap {\rm SC}(\C_2)= \varnothing$.

In this case, we write $\C = \C_1 \uplus \C_2$. A $d$-uniform clutter is said to be \textit{indecomposable} if it is not decomposable. For $d=2$, this definition coincides with the definition of decomposable graphs in \cite{HHBook}.
\end{defn}
Below we will find the regularity of the circuit ideal of $\C$ in terms of circuit ideals of $\C_1$ and $\C_2$, whenever $\C = \C_1 \uplus \C_2$. First we need the following lemma.

\begin{lem} \label{Nice lemma for union of clutters}
Let $\C_1$ and $\C_2$ be $d$-uniform clutters on two vertex sets $V_1$ and $V_2$ and $\C=\C_1 \cup \C_2$. Let $\Delta$ (res. $\Delta_1, \Delta_2$) be the clique complex of  $\C$ (res. $\C_1, \C_2$).
\begin{itemize}
\item[\rm (i)] If $G \subset V_1 \cup V_2$ and $G \cap (V_1 \setminus V_2) \neq \varnothing$, $G \cap (V_2 \setminus V_1) \neq \varnothing$, then $G \in \Delta \Longleftrightarrow |G| \leq d-1$.
\item[\rm (ii)] $\tilde{H}_i(\Delta; K) \cong \tilde{H}_i(\Delta_1 \cup \Delta_2; K)$, for all $i>d-2$.
\end{itemize}
\end{lem}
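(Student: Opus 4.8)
The plan is to prove (i) directly from the definition of the clique complex, and then derive (ii) from (i) by comparing $\Delta$ with $\Delta_1 \cup \Delta_2$ face by face in each dimension $>d-2$.

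\textbf{Part (i).} One direction is trivial: if $|G| \leq d-1$, then $G$ is a clique in any $d$-uniform clutter on a vertex set containing $G$ (vacuously, since there are no $d$-subsets of $G$), hence $G \in \Delta(\C) = \Delta$ by Remark~\ref{initial degree of clique complex}. For the converse, suppose $G \in \Delta$ with $G \cap (V_1 \setminus V_2) \neq \varnothing$ and $G \cap (V_2 \setminus V_1) \neq \varnothing$; I must show $|G| \leq d-1$. Since $G \in \Delta = \Delta(\C)$, $G$ is contained in a clique of $\C$, so without loss of generality $G$ itself is a clique of $\C$. Pick $a \in G \cap (V_1 \setminus V_2)$ and $b \in G \cap (V_2 \setminus V_1)$. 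If $|G| \geq d$, choose a $d$-subset $T \subseteq G$ with $a, b \in T$. Then $T \in \C = \C_1 \cup \C_2$, so $T \in \C_1$ or $T \in \C_2$; but $T \in \C_1$ forces $b \in V_1$, and $T \in \C_2$ forces $a \in V_2$, each a contradiction. Hence $|G| \leq d-1$.

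\textbf{Part (ii).} Here the idea is that $\Delta$ and $\Delta_1 \cup \Delta_2$ differ only in faces of dimension $\leq d-2$, which cannot affect homology in degrees $>d-2$. Clearly $\Delta_1 \cup \Delta_2 \subseteq \Delta$, since any clique of $\C_1$ or $\C_2$ is a clique of $\C$. Conversely, take a face $G \in \Delta$ with $|G| \geq d$, i.e.\ $G$ a clique of $\C$ of dimension $\geq d-1$. By part (i), $G$ cannot meet both $V_1 \setminus V_2$ and $V_2 \setminus V_1$; so, say, $G \subseteq V_1$ (the case $G \subseteq V_2$ is symmetric; the case $G \subseteq V_1 \cap V_2$ is covered by either). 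Every $d$-subset $T \subseteq G$ lies in $\C$ and has all its vertices in $V_1$, hence $T \in \C_1$; thus $G$ is a clique of $\C_1$ and $G \in \Delta_1 \subseteq \Delta_1 \cup \Delta_2$. Therefore $\Delta$ and $\Delta_1 \cup \Delta_2$ have exactly the same faces of dimension $\geq d-1$, and differ (if at all) only in faces of dimension $\leq d-2$. Consequently their $(d-2)$-skeleta agree with their respective complexes through dimension $d-2$, and since reduced homology $\tilde H_i$ for $i > d-2$ depends only on faces of dimension $\geq d-1$ together with the attaching data to the common $(d-2)$-skeleton, the chain complexes of $\Delta$ and $\Delta_1 \cup \Delta_2$ are literally identical in homological degrees $\geq d-1$; in degree $i = d-1$ the boundary maps land in a common $\C_{d-2}$. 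Hence $\tilde H_i(\Delta;K) \cong \tilde H_i(\Delta_1 \cup \Delta_2;K)$ for all $i > d-2$.

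\textbf{Main obstacle.} The only delicate point is making the homology comparison in part (ii) rigorous at the boundary degree $i = d-1$: one must check that the relevant piece of the chain complex — namely $\C_{d-1} \xrightarrow{\partial_{d-1}} \C_{d-2}$ and all higher terms — is genuinely the same for $\Delta$ and for $\Delta_1 \cup \Delta_2$. This follows from the face identification above (same faces in every dimension $\geq d-1$, and the target $\C_{d-2}$ of $\partial_{d-1}$ is the full $(d-2)$-chain group, identical for both since both complexes contain all subsets of size $\leq d-1$ of their vertex set by Remark~\ref{initial degree of clique complex}). Once that is observed, the isomorphism of homology in the stated range is immediate, with no need to invoke Mayer--Vietoris.
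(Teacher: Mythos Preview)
Your proof of part (i) is correct and essentially identical to the paper's.

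For part (ii) you take a genuinely different route. The paper introduces an auxiliary subcomplex
\(\Delta_3 = \langle\, G \in \Delta : G \cap (V_1 \setminus V_2) \neq \varnothing,\ G \cap (V_2 \setminus V_1) \neq \varnothing \,\rangle\),
observes via (i) that $\dim \Delta_3 \leq d-2$ and $\Delta = (\Delta_1 \cup \Delta_2) \cup \Delta_3$, and then runs the reduced Mayer--Vietoris sequence on the pair $(\Delta_1 \cup \Delta_2,\,\Delta_3)$: since $\dim\bigl((\Delta_1\cup\Delta_2)\cap\Delta_3\bigr) \leq d-3$, all the correction terms vanish above degree $d-2$ and the isomorphism follows. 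You instead argue that $\Delta$ and $\Delta_1 \cup \Delta_2$ have exactly the same faces in every dimension $\geq d-1$, so their chain complexes coincide in those degrees and the homology comparison is immediate. Both arguments rest on the same key claim---that every face of $\Delta$ of size $\geq d$ already lies in $\Delta_1$ or $\Delta_2$---which the paper records (without proof) as ``for $F\in\Delta$, $F \in \Delta_i \Longleftrightarrow F \subset V_i$''. Your direct chain-level argument is a bit more elementary, bypassing Mayer--Vietoris; the paper's decomposition has the advantage of packaging the ``extra'' low-dimensional faces neatly into $\Delta_3$.

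Two points in your write-up need tightening. First, the inference ``$T \in \C$ and $T \subset V_1$, hence $T \in \C_1$'' is not automatic from the hypotheses as stated: a priori a circuit $T \subset V_1 \cap V_2$ could lie in $\C_2 \setminus \C_1$. The paper makes the equivalent assertion without comment, so you are in the same position; just be aware that this step tacitly uses that every circuit of $\C$ contained in $V_i$ belongs to $\C_i$. Second, your claim that the target $\C_{d-2}$ is ``identical for both'' complexes is false: a $(d-1)$-subset meeting both $V_1\setminus V_2$ and $V_2\setminus V_1$ is a face of $\Delta$ but not of $\Delta_1\cup\Delta_2$, so Remark~\ref{initial degree of clique complex} does not apply to $\Delta_1\cup\Delta_2$ (which is not a clique complex on $V_1\cup V_2$). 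This does not actually damage your conclusion---the inclusion $\C_{d-2}(\Delta_1 \cup \Delta_2) \hookrightarrow \C_{d-2}(\Delta)$ is injective, so $\ker \partial_{d-1}$ is the same computed against either target---but you should replace the wrong justification with this observation.
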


\begin{proof}
(i) Let $G$ be a subset of $V_1 \cup V_2$, as in (ii). If $|G| \leq d-1$, then by definition, $G$ is a clique in $\C$ and $G \in \Delta$.\\
Now, let $|G| \geq d$ and $x \in G \cap (V_1 \setminus V_2)$, $y \in G \cap (V_2 \setminus V_1)$. If $F$ be a $d$-subset of $G$ which contains $x,y$, then by Proposition~\ref{I_Delta = I (bar C)}(i), $F \notin \C_1 \cup \C_2= \C$. Hence $G \notin \Delta$.

(ii) First note that for $F \in \Delta$, we have:
\begin{equation} \label{Local EQ 00}
F \in \Delta_i \Longleftrightarrow F \subset V_i, \quad \text{for } i=1,2.
\end{equation}
Now, let
$$\Delta_3 = \langle G \in \Delta \colon \quad G \cap (V_1 \setminus V_2) \neq \varnothing, \;G \cap (V_2 \setminus V_1) \neq \varnothing \rangle.$$
Then (i) and (\ref{Local EQ 00}), imply that:
\begin{equation*}
\dim \Delta_3 = d-2, \qquad \Delta = \Delta_1 \cup \Delta_2 \cup \Delta_3.
\end{equation*}
It is clear that $\dim (\Delta_1 \cap \Delta_3) = \dim (\Delta_2 \cap \Delta_3) = d-3$. In particular,
\begin{align*}
\tilde{H}_i\left( (\Delta_1 \cup \Delta_2) \cap \Delta_3; K \right) =0, \quad \text{ for all } i>d-3.
\end{align*}
Hence from~(\ref{Reduced Mayer-Vietoris sequence}), for all $i>d-2$, we have:
\begin{align*}
\tilde{H}_i(\Delta; K) \cong \tilde{H}_i(\Delta_1 \cup \Delta_2; K) \oplus \tilde{H}_i(\Delta_3; K) = \tilde{H}_i(\Delta_1 \cup \Delta_2; K).
\end{align*}
\end{proof}

\begin{cor} \label{Homology with clique in intersection}
Let $\C= \C_1 \cup \C_2$ be a $d$-uniform clutter and $\Delta$ (res. $\Delta_1, \Delta_2$) be the clique complex of  $\C$ (res. $\C_1, \C_2$). If $V(\C_1) \cap V(\C_2)$ is a clique in $\C$, then:
$$\tilde{H}_i(\Delta; K) \cong \tilde{H}_i(\Delta_1; K) \oplus \tilde{H}_i(\Delta_2; K), \quad \text{ for all } i>d-2.$$
\end{cor}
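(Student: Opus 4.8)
The plan is to combine the reduced homology computation of Lemma~\ref{Nice lemma for union of clutters}(ii) with a Mayer--Vietoris argument for the pair $(\Delta_1,\Delta_2)$. Lemma~\ref{Nice lemma for union of clutters}(ii) already gives $\tilde H_i(\Delta;K)\cong\tilde H_i(\Delta_1\cup\Delta_2;K)$ for every $i>d-2$, so it remains only to show that $\tilde H_i(\Delta_1\cup\Delta_2;K)$ splits as $\tilde H_i(\Delta_1;K)\oplus\tilde H_i(\Delta_2;K)$ in that range. I would obtain this from the reduced Mayer--Vietoris sequence~(\ref{Reduced Mayer-Vietoris sequence}) applied to the subcomplexes $\Delta_1,\Delta_2$ of $\Delta_1\cup\Delta_2$, the splitting being automatic once $\Delta_1\cap\Delta_2$ is shown to have vanishing reduced homology in all nonnegative degrees.

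The crux is therefore to identify $\Delta_1\cap\Delta_2$. Put $W=V(\C_1)\cap V(\C_2)$. Every face of $\Delta_1$ is contained in $V(\C_1)$ and every face of $\Delta_2$ in $V(\C_2)$, and since $\C_1,\C_2\subseteq\C$ we have $\Delta_1\subseteq\Delta$ and $\Delta_2\subseteq\Delta$; hence $\Delta_1\cap\Delta_2\subseteq\Delta_W$. For the reverse inclusion, the hypothesis that $W$ is a clique of $\C$ says precisely that $\Delta_W$ is the full simplex on $W$; moreover, whenever $|W|\ge d$, every $d$-subset $G$ of $W$ is a circuit of $\C=\C_1\cup\C_2$, and since $G$ is contained in both $V(\C_1)$ and $V(\C_2)$ it is a circuit of $\C_1$ and of $\C_2$. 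Consequently every subset of $W$ is a clique of $\C_1$ and of $\C_2$, so $\Delta_W\subseteq\Delta_1$ and $\Delta_W\subseteq\Delta_2$, whence $\Delta_1\cap\Delta_2=\Delta_W$. When $W\neq\varnothing$ this is a nonempty simplex, hence acyclic; when $W=\varnothing$ it is the complex whose only face is $\varnothing$, whose sole nonzero reduced homology sits in degree $-1$. In either case $\tilde H_i(\Delta_1\cap\Delta_2;K)=0$ for all $i\ge 0$.

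It then remains to run the Mayer--Vietoris sequence~(\ref{Reduced Mayer-Vietoris sequence}) for $\Delta_1$ and $\Delta_2$: since $\tilde H_i(\Delta_1\cap\Delta_2;K)=\tilde H_{i-1}(\Delta_1\cap\Delta_2;K)=0$ for every $i\ge 1$, the natural map $\tilde H_i(\Delta_1;K)\oplus\tilde H_i(\Delta_2;K)\to\tilde H_i(\Delta_1\cup\Delta_2;K)$ is an isomorphism for all $i\ge 1$. Since $i>d-2$ forces $i\ge 1$ (we are in the case $d\ge 2$), combining this with Lemma~\ref{Nice lemma for union of clutters}(ii) yields $\tilde H_i(\Delta;K)\cong\tilde H_i(\Delta_1;K)\oplus\tilde H_i(\Delta_2;K)$ for all $i>d-2$. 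The one step that genuinely uses the hypothesis --- and the one to be careful about --- is the inclusion $\Delta_W\subseteq\Delta_1\cap\Delta_2$: one must know not merely that every $d$-subset of $W$ is a circuit of $\C$, but that such a circuit, being contained in $V(\C_i)$, actually belongs to $\C_i$; granting this, the acyclicity of $\Delta_1\cap\Delta_2$ is in hand and the conclusion follows formally from Lemma~\ref{Nice lemma for union of clutters}(ii) together with exactness of the Mayer--Vietoris sequence.
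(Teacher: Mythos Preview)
Your approach is exactly the paper's: show $\Delta_1\cap\Delta_2$ is a simplex, feed that into the Mayer--Vietoris sequence to split $\tilde H_i(\Delta_1\cup\Delta_2;K)$, and then invoke Lemma~\ref{Nice lemma for union of clutters}(ii). The paper simply asserts ``by our assumption, $\Delta_1\cap\Delta_2$ is a simplex'' and moves on; you have unpacked this and correctly isolated the one nontrivial step, namely that every $d$-subset of $W=V(\C_1)\cap V(\C_2)$ must lie in each $\C_i$, not merely in $\C=\C_1\cup\C_2$.

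Your caution about that step is justified: it does \emph{not} follow from the hypotheses as literally written. (For $d=2$, take $\C_1$ to be a $4$-cycle on $\{1,2,3,4\}$ and $\C_2=\{13\}$; then $W=\{1,3\}$ is a clique in $\C$, yet $\tilde H_1(\Delta_1;K)=K$ while $\tilde H_1(\Delta;K)=0$.) What makes the paper's assertion go through is the standing convention, already used without comment in equation~(\ref{Local EQ 00}) of the proof of Lemma~\ref{Nice lemma for union of clutters}, that for $F\in\Delta$ one has $F\in\Delta_i\Longleftrightarrow F\subseteq V_i$; this amounts to taking each $\C_i$ to be the induced subclutter $\C_{V_i}$, which is the situation in every application in the paper. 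Under that reading your flagged inclusion $\Delta_W\subseteq\Delta_1\cap\Delta_2$ is immediate, and your argument is complete and matches the paper's.
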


\begin{proof}
By our assumption, $\Delta_1 \cap \Delta_2$ is a simplex. So that $\tilde{H}_i(\Delta_1 \cap \Delta_2; K)=0$ for all $i$. Using (\ref{Reduced Mayer-Vietoris sequence}), for all $i>0$, we have:
$$\tilde{H}_i(\Delta_1 \cup \Delta_2; K) \cong \tilde{H}_i(\Delta_1; K) \oplus  \tilde{H}_i(\Delta_2; K).$$
In addition to Lemma~\ref{Nice lemma for union of clutters}(ii), we get the conclusion.
\end{proof}

\begin{cor} \label{Homology with disjoint edge set}
Let $\C= \C_1 \cup \C_2$ be a $d$-uniform clutter and $\Delta$ (res. $\Delta_1, \Delta_2$) be the clique complex of  $\C$ (res. $\C_1, \C_2$). If ${\rm SC}(\C_1) \cap {\rm SC}(\C_2)= \varnothing$, then:
$$\tilde{H}_i(\Delta; K) \cong \tilde{H}_i(\Delta_1; K) \oplus \tilde{H}_i(\Delta_2; K), \quad \text{ for all } i>d-2.$$
\end{cor}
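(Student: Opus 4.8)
The plan is to follow the template of Corollary~\ref{Homology with clique in intersection}. By Lemma~\ref{Nice lemma for union of clutters}(ii) we already have $\tilde{H}_i(\Delta;K)\cong\tilde{H}_i(\Delta_1\cup\Delta_2;K)$ for all $i>d-2$, so it suffices to compare $\Delta_1\cup\Delta_2$ with $\Delta_1$ and $\Delta_2$ through the reduced Mayer--Vietoris sequence~(\ref{Reduced Mayer-Vietoris sequence}). The hypothesis ${\rm SC}(\C_1)\cap{\rm SC}(\C_2)=\varnothing$ will enter only through the structure of $\Delta_1\cap\Delta_2$.

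First I would identify $\Delta_1\cap\Delta_2$ with the $(d-2)$-skeleton of the simplex on $W:=V(\C_1)\cap V(\C_2)$: a face of $\Delta_1\cap\Delta_2$ is a subset of $W$ which is a clique both in $\C_1$ and in $\C_2$; every subset of $W$ with at most $d-1$ vertices is such a clique by convention, while a $d$-subset of $W$ lying in both $\C_1$ and $\C_2$ would make each of its $(d-1)$-subsets a common element of ${\rm SC}(\C_1)$ and ${\rm SC}(\C_2)$, which is impossible. Hence $\tilde{H}_i(\Delta_1\cap\Delta_2;K)=0$ for every $i>d-2$, and the Mayer--Vietoris sequence immediately yields $\tilde{H}_i(\Delta_1\cup\Delta_2;K)\cong\tilde{H}_i(\Delta_1;K)\oplus\tilde{H}_i(\Delta_2;K)$ for all $i\ge d$. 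For $i=d-1$ the same sequence gives the desired isomorphism as soon as the map
\[
\psi\colon \tilde{H}_{d-2}(\Delta_1\cap\Delta_2;K)\longrightarrow\tilde{H}_{d-2}(\Delta_1;K)\oplus\tilde{H}_{d-2}(\Delta_2;K)
\]
induced by the two inclusions is injective, so the whole proof comes down to establishing that.

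To prove injectivity of $\psi$ I would first record a combinatorial fact coming from the hypothesis: every $(d-1)$-subset $e$ of $W$ is a facet of $\Delta_1$ or a facet of $\Delta_2$. Indeed, if $e$ were not a facet of $\Delta_1$ it would be contained in a $d$-face of $\Delta_1$, hence in a $d$-circuit of $\C_1$, so $e\in{\rm SC}(\C_1)$; likewise, if $e$ were not a facet of $\Delta_2$, then $e\in{\rm SC}(\C_2)$, contradicting ${\rm SC}(\C_1)\cap{\rm SC}(\C_2)=\varnothing$. Now, since $\Delta_1\cap\Delta_2$ has no faces of dimension $d-1$, the group $\tilde{H}_{d-2}(\Delta_1\cap\Delta_2;K)$ consists of the $(d-2)$-cycles $z=\sum_e a_e\,e$ supported on $W$. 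If $[z]\in\ker\psi$ then $z$ is a boundary in $\Delta_1$ and a boundary in $\Delta_2$; but for each $j\in\{1,2\}$ a $(d-1)$-subset $e$ of $W$ can appear in the boundary of an element of $C_{d-1}(\Delta_j)$ only if it is contained in a $d$-face of $\Delta_j$, i.e.\ only if it is not a facet of $\Delta_j$. Therefore $a_e\ne 0$ would force $e$ to be a facet of neither $\Delta_1$ nor $\Delta_2$, contradicting the fact just proved; so $z=0$ and $\psi$ is injective. This finishes the case $i=d-1$, and combining with the case $i\ge d$ and Lemma~\ref{Nice lemma for union of clutters}(ii) gives the statement.

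I expect the injectivity of $\psi$ to be the only real obstacle; the rest is the same bookkeeping with Mayer--Vietoris and Lemma~\ref{Nice lemma for union of clutters}(ii) as in Corollary~\ref{Homology with clique in intersection}. The degenerate cases $|W|\le d-1$, in which $\Delta_1\cap\Delta_2$ is a simplex and hence has no reduced homology in degrees $>d-2$, need only a passing remark.
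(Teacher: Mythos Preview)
Your proof is correct and follows essentially the same route as the paper: reduce to $\Delta_1\cup\Delta_2$ via Lemma~\ref{Nice lemma for union of clutters}(ii), observe that $\dim(\Delta_1\cap\Delta_2)\le d-2$ so Mayer--Vietoris handles all $i\ge d$, and then treat $i=d-1$ by a chain-level argument using the disjointness of ${\rm SC}(\C_1)$ and ${\rm SC}(\C_2)$.

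The only variation is in how the $i=d-1$ case is argued. The paper establishes \emph{surjectivity} of $\tilde H_{d-1}(\Delta_1)\oplus\tilde H_{d-1}(\Delta_2)\to\tilde H_{d-1}(\Delta_1\cup\Delta_2)$ directly: it splits a $(d-1)$-cycle as a sum of chains supported on $\C_1$ and on $\C_2$, and uses ${\rm SC}(\C_1)\cap{\rm SC}(\C_2)=\varnothing$ to see that each summand is itself a cycle. You instead establish \emph{injectivity} of the next map $\psi$ in degree $d-2$, arguing that a cycle supported on $W$ which bounds in $\Delta_j$ must be supported on ${\rm SC}(\C_j)$, hence on ${\rm SC}(\C_1)\cap{\rm SC}(\C_2)=\varnothing$. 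These are equivalent by exactness and rest on the same combinatorial observation (the boundary of any $(d-1)$-face of $\Delta_j$ lands in ${\rm SC}(\C_j)$), so the difference is one of packaging rather than substance. Your explicit identification of $\Delta_1\cap\Delta_2$ with the $(d-2)$-skeleton of the simplex on $W$ is a small bonus the paper does not spell out.
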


\begin{proof}
By our assumption, $\dim (\Delta_1 \cap \Delta_2) \leq d-2$. So that $\tilde{H}_i(\Delta_1 \cap \Delta_2; K)=0$ for all $i> d-2$. Using (\ref{Reduced Mayer-Vietoris sequence}), for all $i>d-1$, we have:
$$\tilde{H}_i(\Delta_1 \cup \Delta_2; K) \cong \tilde{H}_i(\Delta_1; K) \oplus  \tilde{H}_i(\Delta_2; K)$$
and $\tilde{H}_{d-1}(\Delta_1; K) \oplus  \tilde{H}_{d-1}(\Delta_2; K) \hookrightarrow \tilde{H}_{d-1}(\Delta_1 \cup \Delta_2; K).$\\

\noindent We claim that $\tilde{H}_{d-1}(\Delta_1; K) \oplus  \tilde{H}_{d-1}(\Delta_2; K) \cong \tilde{H}_{d-1}(\Delta_1 \cup \Delta_2; K).$
\begin{itemize}
\item[] \textbf{Proof of claim. }Let $\mathscr{C}(\Delta, \partial)$ (res. $\mathscr{C}(\Delta_1, \partial^{(1)})$, $\mathscr{C}(\Delta_2, \partial^{(2)})$) be the chain complex of $\Delta$ (res. $\Delta_1$, $\Delta_2$). Since ${\rm SC}(\C_1) \cap {\rm SC}(\C_2) = \varnothing$, we have:
\begin{equation} \label{Local EQ 15}
\bigoplus\limits_{F \in \Delta \atop \dim F=d-1} KF= \left( \bigoplus\limits_{F \in \Delta_1 \atop \dim F=d-1} KF \right) \oplus \left( \bigoplus\limits_{F \in \Delta_2 \atop \dim F=d-1} KF \right).
\end{equation}
Take $0 \neq F+ {\rm Im}\, \partial_{d} \in \tilde{H}_{d-1}(\Delta; K)$. Then by (\ref{Local EQ 15}), we can separate $F$ as $F= (c_1F_1 + \cdots + c_rF_r) + (c'_1G_1 + \cdots + c'_sG_s)$ where $c_i, c'_i \in K$ and $F_i \in \C_1, G_i \in \C_2$. Let
\begin{align*}
\partial_{d-1} (c_1F_1 + \cdots + c_rF_r)=(d_1e_1 + \cdots + d_{r'}e_{r'}) \\
\partial_{d-1} (c'_1G_1 + \cdots + c'_sG_s)= (d'_1f_1 + \cdots + d'_{s'}f_{s'})
\end{align*}
where, $d_i, d'_i \in K$ and $e_i \in {\rm SC}(\C_1), f_i \in {\rm SC}(\C_2)$. Since
\begin{align*}
0=\partial_d (F)& =\partial_{d-1} (c_1F_1 + \cdots + c_rF_r)+ \partial_{d-1} (c'_1G_1 + \cdots + c'_sG_s)\\
&=(d_1e_1 + \cdots + d_{r'}e_{r'}) + (d'_1f_1 + \cdots + d'_{s'}f_{s'})
\end{align*}
and ${\rm SC}(\C_1) \cap {\rm SC}(\C_2) = \varnothing$, we conclude that
$$\partial_{d-1} (c_1F_1 + \cdots + c_rF_r) = \partial_{d-1} (c'_1G_1 + \cdots + c'_sG_s)=0.$$
This means that the natural map
$$\tilde{H}_{d-1}(\Delta_1; K) \oplus  \tilde{H}_{d-1}(\Delta_2; K) \hookrightarrow \tilde{H}_{d-1}(\Delta_1 \cup \Delta_2; K)$$
is onto too.
\end{itemize}
By what we have already proved, we have:
\begin{equation*}
\tilde{H}_{i}(\Delta_1; K) \oplus  \tilde{H}_{i}(\Delta_2; K) \cong \tilde{H}_{i}(\Delta_1 \cup \Delta_2; K), \quad \text{for all } i>d-2.
\end{equation*}
In addition with Lemma~\ref{Nice lemma for union of clutters}(ii), we get the conclusion.
\end{proof}

\begin{rem} \rm
Let $\C_1, \C_2$ be $d$-uniform clutters on vertex set $V_1, V_2$ with $V_1 \cup V_2=[n]$ and $\C=\C_1 \cup \C_2$. For all $W \subset [n]$, one can easily check that:
\begin{itemize}
\item[\rm (i)] $\C_W = ({\C_1})_W \cup ({\C_2})_W$.
\item[\rm (ii)] $\Delta_W = \Delta(\C_W)$.
\item[\rm (iii)] ${\rm SC}(({\C_1})_W) \cap {\rm SC}(({\C_2})_W) = ({{\rm SC}(\C_1 \cap \C_2)})_W$.
\end{itemize}
Hence, if $V_1 \cap V_2$ is a clique or ${\rm SC}(\C_1) \cap {\rm SC}(\C_2) = \varnothing$, then (i)-(iii) and Corollary \ref{Homology with clique in intersection}, \ref{Homology with disjoint edge set}, imply that:
\begin{equation} \label{Local EQ 16}
\tilde{H}_i(\Delta_W;K) \cong \tilde{H}_i({(\Delta_1)}_W;K) \oplus \tilde{H}_i({(\Delta_2)}_W;K), \quad \text{for all }i>d-2.
\end{equation}
\end{rem}

Now we present the main theorem of this section.

\begin{thm} \label{betti number with E(C_1) cap E(C_2)= empty}
Let $\C = \C_1 \uplus \C_2$ be a $d$-uniform clutter and $I$ (res. $I_1, I_2$) be the circuit ideals of $\C$ (res. $\C_1, \C_2$). Then,
\begin{itemize}
\item[\rm (i)] $\beta_{i,j}(I) \geq \beta_{i,j}(I_1)+\beta_{i,j}(I_2)$, for $j-i>d$.
\item[\rm (ii)] If $I_1$ and $I_2$ are non-zero ideals, then $\reg (I) = \max \{\reg (I_1), \;\reg (I_2) \}$.
\end{itemize}
\end{thm}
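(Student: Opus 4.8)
The plan is to reduce both parts to the homological statement \eqref{Local EQ 16} via Hochster's formula (Theorem~\ref{Hochster Formula}) applied to the clique complexes. Since $\C = \C_1 \uplus \C_2$, by definition $V_1 \cap V_2$ is a clique or ${\rm SC}(\C_1)\cap {\rm SC}(\C_2)=\varnothing$; setting $n = |V_1 \cup V_2|$ and using Proposition~\ref{I_Delta = I (bar C)}(iii) we identify $I = I_\Delta$, $I_1 = I_{\Delta_1}$, $I_2 = I_{\Delta_2}$ where $\Delta,\Delta_1,\Delta_2$ are the respective clique complexes (after regarding $\C_1,\C_2$ as clutters on the common vertex set $[n]$, which only adds isolated vertices and does not change the circuit ideal up to the obvious polynomial extension). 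Remark~\ref{initial degree of clique complex} gives ${\rm indeg}\,I_\Delta \geq d$ and likewise for $\Delta_1,\Delta_2$.

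For part (i): fix $W \subset [n]$ with $|W| = j$, and suppose $j - i > d$, so $j - i - 2 > d-2$, i.e. $j-i-2 \geq d-1$. By \eqref{Local EQ 16} we have
$$\tilde{H}_{j-i-2}(\Delta_W;K) \cong \tilde{H}_{j-i-2}((\Delta_1)_W;K)\oplus \tilde{H}_{j-i-2}((\Delta_2)_W;K),$$
hence $\dim_K \tilde{H}_{j-i-2}(\Delta_W;K) = \dim_K \tilde{H}_{j-i-2}((\Delta_1)_W;K) + \dim_K \tilde{H}_{j-i-2}((\Delta_2)_W;K)$. Summing over all $W \subset [n]$ with $|W|=j$ and invoking Hochster's formula for each of the three ideals yields $\beta_{i,j}(I) = \beta_{i,j}(I_1) + \beta_{i,j}(I_2)$ for $j-i > d$; in particular the asserted inequality holds (in fact as an equality).

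For part (ii): first, $\Delta$ is $(d-1)$-dimensional (every circuit has $d$ vertices, and larger cliques cannot appear since $\C$ is $d$-uniform, cf. Proposition~\ref{I_Delta = I (bar C)}(ii)) and ${\rm indeg}\,I_\Delta = d$ because $I_1 \neq 0$ forces some $d$-subset of $[n]$ to lie outside $\Delta_1$, hence outside $\Delta$; the same holds for $\Delta_1,\Delta_2$. By Theorem~\ref{betti, reg and C-M with indeg=1+dim}(ii), each of $\reg I,\reg I_1,\reg I_2$ lies in $\{d,d+1\}$, so the claim amounts to showing $\reg I = d+1$ if and only if $\reg I_1 = d+1$ or $\reg I_2 = d+1$. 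By Corollary~\ref{linearity with indeg=1+dim}, $\reg I = d+1 \iff \tilde{H}_{d-1}(\Delta;K)\neq 0$, and likewise for $\Delta_1,\Delta_2$. Now apply Lemma~\ref{Nice lemma for union of clutters}(ii) together with Corollary~\ref{Homology with clique in intersection} (if $V_1\cap V_2$ is a clique) or Corollary~\ref{Homology with disjoint edge set} (if ${\rm SC}(\C_1)\cap {\rm SC}(\C_2)=\varnothing$) at the level $i = d-1 > d-2$, obtaining
$$\tilde{H}_{d-1}(\Delta;K) \cong \tilde{H}_{d-1}(\Delta_1;K)\oplus \tilde{H}_{d-1}(\Delta_2;K).$$
Thus the left side is nonzero iff one of the summands is, which is exactly the desired equivalence; combined with part (i) (which already gives $\beta_{i,j}$ agree for $j-i>d$, so $\reg I \geq \max\{\reg I_1,\reg I_2\}$ is immediate anyway), this proves $\reg I = \max\{\reg I_1,\reg I_2\}$.

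The only delicate point is the bookkeeping when passing from clutters on $V_1,V_2$ to clutters on the common vertex set $[n] = |V_1\cup V_2|$: one must check that the Betti numbers are unaffected by adjoining the extra variables corresponding to vertices in $V_2\setminus V_1$ (resp.\ $V_1\setminus V_2$), which is standard since tensoring a minimal free resolution with a polynomial ring over more variables stays minimal, and that the hypothesis of the relevant corollary (clique in the intersection, or disjoint submaximal-circuit sets) is genuinely what \eqref{Local EQ 16} requires. Everything else is a direct citation of the already-established homology computations and Hochster's formula.
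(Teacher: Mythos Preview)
Your part (i) argument follows the paper's route via \eqref{Local EQ 16} and Hochster, but your claimed \emph{equality} $\beta_{i,j}(I)=\beta_{i,j}(I_1)+\beta_{i,j}(I_2)$ is false: the paper's own Example right after the theorem exhibits $\beta_{2,6}(I)=4$ while $\beta_{2,6}(I_1)=\beta_{2,6}(I_2)=0$. The mistake is precisely the ``bookkeeping'' you wave away. Enlarging the vertex set of $\C_1$ from $V_1$ to $[n]$ does \emph{not} merely extend the polynomial ring: the complement $\bar{\C}_1=\C_{|V_1|,d}\setminus\C_1$ is taken relative to the ambient vertex set, so the circuit ideal itself changes (it acquires every degree-$d$ monomial touching $[n]\setminus V_1$). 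Thus your Hochster sum $\sum_{W\subset[n],\,|W|=j}\dim\tilde H_{j-i-2}((\Delta_1)_W)$ is \emph{not} $\beta_{i,j}(I_1)$; it is $\geq \beta_{i,j}(I_1)$ because it contains the subsum over $W\subset V_1$. This is exactly how the paper obtains the (in general strict) inequality.

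Your part (ii) argument has a separate and fatal gap: you assert $\dim\Delta=d-1$, citing $d$-uniformity and Proposition~\ref{I_Delta = I (bar C)}(ii). Neither implies this. A clique in a $d$-uniform clutter is any subset all of whose $d$-subsets are circuits; such cliques can be arbitrarily large (take $\C_1=\C_{m,d}$ on $\{1,\dots,m\}$ plus one extra vertex and one missing circuit, glued to any $\C_2$). Hence Theorem~\ref{betti, reg and C-M with indeg=1+dim}(ii) and Corollary~\ref{linearity with indeg=1+dim} are unavailable, and you cannot conclude $\reg I\in\{d,d+1\}$ or reduce to $\tilde H_{d-1}(\Delta)$. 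The paper instead argues directly: if $r=\reg I>d$, pick $W\subset[n]$ with $\tilde H_{r-2}(\Delta_W)\neq0$, use \eqref{Local EQ 16} to get (say) $\tilde H_{r-2}((\Delta_1)_W)\neq0$, then \emph{pass to} $W'=W\cap V(\Delta_1)$ so that Hochster on the correct vertex set $V_1$ yields $\beta_{|W'|-r,|W'|}(I_1)\neq0$ and hence $\reg I_1\geq r$. That restriction to $W'$ is precisely what is needed to land back in the Hochster formula for $I_1$ over $K[x_v:v\in V_1]$, and it is what your argument is missing.
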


\begin{proof}
(i) Let $\Delta$ (res. $\Delta_1, \Delta_2$) be the clique complex of  $\C$ (res. $\C_1, \C_2$). Then, by (\ref{Local EQ 16}) and Theorem~\ref{Hochster Formula}, for $j-i>d$, we have:
\begin{align*}
\beta_{i,j}(I_\Delta) & = \sum\limits_{W \subset [n] \atop |W|=j} \dim_K \tilde{H}_{j-i-2}(\Delta_W;K) \\
&= \sum\limits_{W \subset [n] \atop |W|=j} \left[ \dim_K \tilde{H}_{j-i-2}({(\Delta_1)}_W;K) + \dim_K \tilde{H}_{j-i-2}({(\Delta_2)}_W;K) \right] \\
& =  \sum\limits_{W \subset [n] \atop |W|=j} \dim_K \tilde{H}_{j-i-2}({(\Delta_1)}_W;K) +  \sum\limits_{W \subset [n] \atop |W|=j} \dim_K \tilde{H}_{j-i-2}({(\Delta_2)}_W;K) \\
& \geq \beta_{i,j}(I_{\Delta_1})+ \beta_{i,j}(I_{\Delta_2}).
\end{align*}
Hence by Proposition~\ref{I_Delta = I (bar C)}(iii), $\beta_{i,j}(I) \geq \beta_{i,j}(I_1)+\beta_{i,j}(I_2)$, whenever $j-i>d$.

(ii) If $I$ has a $d$-linear resolution, $\beta_{i,j}(I) =0$ for all $j-i>d$. So that (i) implies that $\beta_{i,j}(I_1) = \beta_{i,j}(I_1) =0$, for all $j-i>d$. This means that, both of ideals $I_1$ and $I_2$ have a $d$-linear resolution and the equality $\reg (I) = \max \{\reg (I_1), \reg (I_2) \}$ holds.

Assume that, $I$ does not have $d$-linear resolution. Let
$$r=\reg(I) = \max\{ j-i \colon \quad \beta_{i,j}(I) \neq 0 \}$$
and $j_0, i_0$ be such that $r =j_0 - i_0$ with $\beta_{i_0,j_0}(I) \neq 0.$ By Theorem~\ref{Hochster Formula}, there exists a $W \subset [n]$, with $|W|=j_0$ and $\tilde{H}_{r-2}(\Delta_W; K) \neq 0$. Since $r-2>d-2$, from (\ref{Local EQ 16}), we conclude that, either 
\begin{align*}
\tilde{H}_{r-2}({(\Delta_1)}_W;K) \neq 0 \quad \text{or} \quad \tilde{H}_{r-2}({(\Delta_2)}_W;K) \neq 0.
\end{align*}
Without loss of generality, we may assume that $\tilde{H}_{r-2}({(\Delta_1)}_W;K) \neq 0$ and we put $W'=W \cap V(\Delta_1)$. Then, $W'$ is a subset of the vertex set of $\Delta_1$ with the property that $\tilde{H}_{r-2}({(\Delta_1)}_{W'}; K) \neq 0$. Using Theorem~\ref{Hochster Formula} once again, we have:
\begin{align*}
\beta_{|W'|-r, |W'|}(I_1) = \sum\limits_{T \subset V(\Delta_1) \atop |T| = |W'|} \dim_K \tilde{H}_{r-2}\left( (\Delta_1)_T; K \right) \geq \dim_K \tilde{H}_{r-2}\left( (\Delta_1)_{W'}; K \right) > 0.
\end{align*}
Hence, $\beta_{|W'|-r, |W'|}(I_1) \neq 0$ and,
\begin{align*}
\max \{\reg (I_1), \reg (I_2) \} \geq \reg(I_1) & = \max\{ j-i \colon \; \beta_{i,j}(I_1) \neq 0 \} \\
& \geq (|W'|)- (|W'|-r) = r.
\end{align*}
The inequality, $\max \{\reg (I_1), \reg (I_2) \} \leq r$ comes from (i). Putting together these inequalities, we get the conclusion.

\end{proof}

The following example shows that, the inequality $\beta_{i,j}(I) \geq \beta_{i,j}(I_1)+\beta_{i,j}(I_2)$, for $j-i>d$ in Theorem~\ref{betti number with E(C_1) cap E(C_2)= empty}, may be strict.
\begin{ex} \rm
Consider the $3$-uniform clutter $\C = \{ 123, 124, 134, 235, 245, 345, 347, 367, 467, 356, 456 \}.$

\begin{center}
\psset{xunit=0.77cm,yunit=0.77cm,algebraic=true,dotstyle=o,dotsize=3pt 0,linewidth=0.8pt,arrowsize=3pt 2,arrowinset=0.25}
\begin{pspicture*}(2.86,0.0)(5.78,4.82)
\pspolygon[linestyle=none,fillstyle=solid,fillcolor=lightgray,opacity=0.1](4.38,2.96)(4,2.36)(4.36,0.96)
\psline(4.38,2.96)(4,2.36)
\psline(4,2.36)(4.36,0.96)
\psline(4.36,0.96)(4.38,2.96)
\psline(3.34,3.88)(4.38,2.96)
\psline(3.34,3.88)(4,2.36)
\psline(3.34,3.88)(3.26,2.28)
\psline(3.26,2.28)(4.36,0.96)
\psline(3.26,2.28)(4,2.36)
\psline[linestyle=dashed,dash=1pt 1pt](3.26,2.28)(4.38,2.96)
\psline(4.66,4.28)(4.38,2.96)
\psline(4.66,4.28)(4,2.36)
\psline(4.66,4.28)(5.36,2.62)
\psline(4,2.36)(5.36,2.62)
\psline[linestyle=dashed,dash=1pt 1pt](4.38,2.96)(5.36,2.62)
\psline(5.36,2.62)(4.36,0.96)
\rput[tl](3.22,4.32){$1$}
\rput[tl](3.02,2.3){$2$}
\rput[tl](3.78,2.28){$3$}
\rput[tl](4.45,3.28){$4$}
\rput[tl](4.32,0.9){$5$}
\rput[tl](5.45,2.6){$6$}
\rput[tl](4.64,4.68){$7$}
\rput[tl](4.26,0.4){$\C$}
\end{pspicture*}
\end{center}

Let $\C_1 = \{123, 124, 134, 235, 245, 345 \}$ and $\C_2 = \{ 345, 347, 367, 467, 356, 456\}$. Then, $\C = \C_1 \uplus \C_2$ and a direct computation using \cocoa, shows that the minimal free resolution of the ideal $I(\bar{\C})$ is:
\begin{align*}
0 \to S^6(-7) \to S^{30}(-6) \oplus S^2(-7) \to S^{62}(-5) \oplus S^4(-6) \to S^{61}(-4) \oplus S^2(-5) \to S^{24}(-3) \to I \to 0.
\end{align*}
Note that $\beta^K_{2,6}(I(\bar{\C}_1)) = \beta^K_{2,6}(I(\bar{\C}_2)) = 0$, while $\beta^K_{2,6}(I(\bar{\C})) =4$.
\end{ex}

\begin{rem} \label{Remark on regularity of union of clutters} \rm
Let $\C = \C_1 \uplus \C_2$ be a $d$-uniform clutter on $[n]$ and $I$ (res. $I_1, I_2$) be the circuit ideals of $\C$ (res. $\C_1, \C_2$). Let $\Delta$ (res. $\Delta_1, \Delta_2$) be the clique complex of  $\C$ (res. $\C_1, \C_2$).
\begin{itemize}
\item If both of $I_1$ and $I_2$ are zero ideals, then $\Delta_1$ and $\Delta_2$ are simplexes and they have zero reduced homologies in all degrees. So that $\tilde{H}_i(\Delta_W; K) = 0$ for all $W \subset [n]$ and $i>d-2$ by (\ref{Local EQ 16}). So that $\beta_{i,j}(I) =0$ for all $j-i>d$. That is, the ideal $I$ has a $d$-linear resolution.

\item If only one of the ideals $I_1$ or $I_2$ is a zero ideal, say $I_1$, then $\Delta_1$ is a simplex and all the reduced homologies of $\Delta_1$ is zero. Using (\ref{Local EQ 16}), we conclude that $\tilde{H}_i(\Delta_W;K) \cong \tilde{H}_i({(\Delta_2)}_W;K)$ for all $W \subset [n]$ and $i>d-2$. This implies that $\reg (I) = \reg(I_2)$.

\item If $I_1$ and $I_2$ are non-zero ideals, then Theorem~\ref{betti number with E(C_1) cap E(C_2)= empty}(ii) implies that 
$$\reg (I) = \max \{\reg (I_1), \reg (I_2) \}.$$
\end{itemize}
\end{rem}

\section{Minimal to $d$-linearity}\label{Section5}
In this section, we define three classes of clutters which their circuit ideals do not have $d$-linear resolution but the circuit ideal of any proper subclutter of them has a $d$-linear resolution.

A clutter $\C$ is said to be \emph{connected} if for each two vertices $v_1$ and $v_2$, there is a sequence of circuits $F_1,\ldots,F_r$ such that $v_1\in F_1, v_2\in F_r$ and $F_i\cap F_{i+1}\neq \varnothing$. A connected  $d$-uniform clutter $\C$ is called a \emph{tree} if any subclutter of $\C$ has a submaximal circuit of degree one. A union of some trees is called a \emph{forest}. By Remark~3.10 of \cite{maar}, the circuit ideal of any $d$-uniform forest has a $d$-linear resolution.

\begin{defn} \label{Definition of minimal to linearity} \rm
Let $\C$ be a $d$-uniform clutter on $[n]$, $\Delta= \Delta(\C)$ its clique complex. Suppose that $I= I(\bar{\C}) \subset K[x_1, \ldots, x_n]$, the circuit ideal of $\C$, does not have $d$-linear resolution.
\begin{itemize}
\item[\rm (i)] The clutter $\C$ is called \textit{obstruction to $d$-linearity} if for every proper subclutter $\C' \varsubsetneq \C$, the ideal $I(\bar{\C}')$ has a $d$-linear resolution.
\item[\rm (ii)] The clutter $\C$ is called \textit{minimal to $d$-linearity} if it is obstruction to $d$-linearity and $\dim \Delta = d-1$.
\item[\rm (iii)] The clutter $\C$ is called \textit{almost tree} if every proper subclutter of $\C$ is a tree.
\end{itemize}
Let $\mathscr{C}^{\rm obs}_d$, $\mathscr{C}^{\rm min}_d$ and $\mathscr{C}^{\rm a.tree}_d$  denote the classes of clutters which are obstruction to $d$-linearity, minimal to $d$-linearity and almost tree, respectively.

Note that if $\C \in \mathscr{C}^{\min}_d$ and $\Delta=\Delta(\C)$ is its clique complex, then we have:
\begin{equation}
{\rm indeg}\, I_\Delta = {\rm indeg}\, I(\bar{\C}) = d = 1+\dim \Delta .
\end{equation}
\end{defn}

\begin{lem} \label{Properties of minimal to linearity}
Let $\C$ be a $d$-uniform clutter on $[n]$ which is minimal to $d$-linearity and $\Delta= \Delta(\C)$ be the clique complex of $\C$. Then,
\begin{itemize}
\item[\rm (i)] $\dim_K \tilde{H}_{d-1}(\Delta; K)=1$.
\item[\rm (ii)] If $W \varsubsetneq [n]$, then $\tilde{H}_{d-1}(\Delta_W; K)=0$.
\end{itemize}
\end{lem}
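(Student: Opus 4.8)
The plan is to derive both statements from Corollary~\ref{linearity with indeg=1+dim} together with the reduced Mayer--Vietoris sequence~(\ref{Reduced Mayer-Vietoris sequence}), using repeatedly that when a $d$-uniform clutter $\mathcal D$ has $\dim\Delta(\mathcal D)=d-1$ its clique complex consists of \emph{all} subsets of its vertex set of size $\le d-1$ together with the circuits of $\mathcal D$ (Proposition~\ref{I_Delta = I (bar C)}(i)). Two preliminary remarks set the stage. First, since $\dim\Delta=d-1$ there are no $d$-chains, so $\tilde H_{d-1}(\Delta;K)=Z_{d-1}(\Delta):=\ker\partial_{d-1}$, and likewise $\tilde H_{d-1}(\Delta_W;K)=Z_{d-1}(\Delta_W)$ for every $W$; because the $(d-1)$-faces of $\Delta_W$ lie among those of $\Delta$ and $\partial_{d-1}$ respects the inclusion of chains, we obtain a natural injection $\tilde H_{d-1}(\Delta_W;K)\hookrightarrow\tilde H_{d-1}(\Delta;K)$. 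Second, $I_\Delta=I(\bar\C)$ (Proposition~\ref{I_Delta = I (bar C)}(iii)) has no $d$-linear resolution, so Corollary~\ref{linearity with indeg=1+dim} gives $\tilde H_{d-1}(\Delta;K)\neq 0$.

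For (i), fix a circuit $F_0\in\C$ and put $\C'=\C\setminus\{F_0\}$, $\Delta'=\Delta(\C')$. First, $\C'\neq\varnothing$: were $\C=\{F_0\}$, then $F_0$ would be the only $(d-1)$-face of $\Delta$ and $\partial_{d-1}F_0\neq0$ would force $\tilde H_{d-1}(\Delta;K)=0$, contradicting the above. Hence $\C'$ is a nonempty proper subclutter, $\dim\Delta'=d-1$, and ${\rm indeg}\,I_{\Delta'}=d$ (the generator $\mathbf{x}_{F_0}$ has degree $d$, and every subset of $[n]$ of size $<d$ is a face of $\Delta'$). As $\C$ is minimal to $d$-linearity, $I(\bar{\C'})=I_{\Delta'}$ has a $d$-linear resolution, so Corollary~\ref{linearity with indeg=1+dim} gives $\tilde H_{d-1}(\Delta';K)=0$. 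Writing $\sigma=\langle F_0\rangle$ for the full simplex on $F_0$, one has $\Delta=\Delta'\cup\sigma$ with $\Delta'\cap\sigma$ equal to the boundary of $\sigma$ (every proper subset of $F_0$ lies in $\Delta'$, whereas $F_0\notin\Delta'$), a $(d-2)$-sphere. Since $\sigma$ is acyclic and $\tilde H_{d-1}(\Delta';K)=0$, the degree-$(d-1)$ segment of~(\ref{Reduced Mayer-Vietoris sequence}) collapses to an exact sequence $0\to\tilde H_{d-1}(\Delta;K)\to\tilde H_{d-2}(\partial\sigma;K)=K$; hence $\dim_K\tilde H_{d-1}(\Delta;K)\le1$, and together with $\tilde H_{d-1}(\Delta;K)\neq0$ this proves (i).

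For (ii), let $z$ span $\tilde H_{d-1}(\Delta;K)=Z_{d-1}(\Delta)$ and let $\mathrm{supp}(z)\subseteq\C$ be the set of circuits with nonzero coefficient in $z$. I claim $\mathrm{supp}(z)=\C$. If some $F_0\in\C$ were missing from $z$, then $z$ would already be a $(d-1)$-chain of $\Delta(\C\setminus\{F_0\})=\Delta\setminus\{F_0\}$, hence a nonzero element of $\tilde H_{d-1}(\Delta(\C\setminus\{F_0\});K)$; but $\C\setminus\{F_0\}$ is a proper subclutter, so exactly the argument of (i) (Corollary~\ref{linearity with indeg=1+dim} applied to $\C\setminus\{F_0\}$) shows this group is $0$, a contradiction. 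Therefore every circuit of $\C$, and hence every vertex of $[n]$, occurs in $z$. Given $W\varsubsetneq[n]$, pick $v\in[n]\setminus W$ and a circuit $F_0\ni v$; then $F_0\in\mathrm{supp}(z)$ but $F_0\not\subseteq W$, so $z$ is not a $(d-1)$-chain of $\Delta_W$. Since $\tilde H_{d-1}(\Delta_W;K)\hookrightarrow\tilde H_{d-1}(\Delta;K)=Kz$, every element of $\tilde H_{d-1}(\Delta_W;K)$ is a scalar multiple of $z$, and a nonzero one would be a $(d-1)$-chain of $\Delta_W$, which is impossible; so $\tilde H_{d-1}(\Delta_W;K)=0$.

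The routine verifications in (i) — that $\Delta=\Delta'\cup\sigma$ with intersection the boundary $(d-2)$-sphere of $\sigma$, and that $\dim\Delta'=d-1$, ${\rm indeg}\,I_{\Delta'}=d$ — are immediate from Proposition~\ref{I_Delta = I (bar C)} once $\dim\Delta=d-1$. The real content is the common step used in both parts: deleting one circuit yields a proper subclutter, whose circuit ideal is $d$-linear by minimality, so the top reduced homology of its clique complex vanishes by Corollary~\ref{linearity with indeg=1+dim}; splicing this into Mayer--Vietoris does the rest. The only delicate point is the last line of (ii), where I use that every vertex of $[n]$ lies in some circuit of $\C$; this is the natural standing convention on clutters (a vertex in no circuit being inessential) and should be read into the hypothesis. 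Equivalently, one may run (ii) by applying Corollary~\ref{linearity with indeg=1+dim} directly to the induced subclutter $\C_W$: its clique complex is $\Delta_W$, and since $\C_W$ is then a proper subclutter of $\C$, $I(\bar{\C_W})$ is $d$-linear, whence $\tilde H_{d-1}(\Delta_W;K)=0$; the degenerate cases $\C_W=\varnothing$ and $\overline{\C_W}=\varnothing$ force $\dim\Delta_W\le d-2$ and $\Delta_W$ a simplex respectively, so they are harmless.
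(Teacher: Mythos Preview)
Your proof is correct. Both parts ultimately rest on the same core observation---that for any proper subclutter $\C'$ the clique complex $\Delta(\C')$ has vanishing top homology because $I(\bar{\C'})$ has $d$-linear resolution---but you package part (i) differently from the paper. The paper argues (i) by pure linear algebra on cycles: any nonzero $(d-1)$-cycle must be supported on all of $\C$ (else it would be a nonzero cycle in some $\Delta(\C')$), and then two independent cycles could be combined to produce one with strictly smaller support, a contradiction. Your Mayer--Vietoris decomposition $\Delta=\Delta(\C\setminus\{F_0\})\cup\langle F_0\rangle$, with intersection the boundary sphere $\partial\langle F_0\rangle$, is a clean topological alternative that yields $\dim_K\tilde H_{d-1}(\Delta;K)\le 1$ in one stroke; the paper's argument is perhaps more elementary, while yours makes the geometric picture explicit. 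For (ii) your second approach (applying linearity of $I(\bar{\C_W})$ to force $\tilde H_{d-1}(\Delta_W;K)=0$) is exactly the paper's proof; your first approach essentially recycles the full-support step from the paper's (i), which also works. The caveat you flag about every vertex lying in some circuit is real and is tacitly assumed in the paper's proof of (ii) as well.
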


\begin{proof}

(i) Let $0 \neq F= c_1F_1+ \cdots + c_rF_r \in \tilde{H}_{d-1}(\Delta; K)$ where $c_i \in K$ and $F_i \in \C$. Then, ${\rm Supp}(F) := \{F_i \colon \quad c_i \neq 0\}$ is equal to $\C$, because every proper subclutter of $\C$ has linear resolution.

If $\dim_K \tilde{H}_{d-1}(\Delta; K)>1$ and $F= c_1F_1+ \cdots + c_rF_r, G= d_1F_1+ \cdots + d_rF_r$ be two basis element of $\tilde{H}_{d-1}(\Delta; K)$, then $0 \neq c_1G-d_1F \in \tilde{H}_{d-1}(\Delta; K)$ and $
{\rm Supp}(c_1G-d_1F) \varsubsetneq \C$ which is a contradiction.

(ii) One can easily check that $\Delta_W = \Delta(\C_W)$ for all $W \subset [n]$. By definition, for all $W \varsubsetneq [n]$, the induced clutter $\C_W$ has linear resolution. So that by Theorem~\ref{Hochster Formula}, $\tilde{H}_{d-1}(\Delta_W; K)= \tilde{H}_{d-1}(\Delta(\C_W); K)= 0$.
\end{proof}

The following is the main theorem of this section which gives an  explicit minimal free resolution for the circuit ideal of a clutter which is minimal to $d$-linearity. 

\begin{thm}\label{Resolution of Minimal to linearity}
Let $\C$ be a $d$-uniform clutter on $[n]$ which is minimal to $d$-linearity and $I= I(\bar{\C}) \subset K[x_1, \ldots, x_n]$ be the circuit ideal. Then the minimal free resolution of $I$ is
\begin{align}
0  \to S^{\beta_{n-d,n}}(-n) & \to S(-n) \oplus S^{\beta_{n-d-1, n-1}}(-(n-1)) \to  S^{\beta_{n-d-2, n-2}}(-(n-2)) \nonumber \\
& \to \cdots \to S^{\beta_{1,d+1}}(-(d+1)) \to S^{\beta_{0,d}}(-d) \to I \to 0 \label{Resolution of Minimal to linearity+Shape}
\end{align}
where,
\begin{itemize}
\item[\rm (i)] $\beta_{n-d,n}(I) = 1 -e(S/I) + \sum\limits_{i=0}^{d-1} (-1)^{d+i-1} {n \choose i}$.
\item[\rm (ii)] $ \beta_{i, i+d}(I) = {n-d \choose i} \left( \frac{d}{d+i}{n \choose d} - e(S/I) \right)$, for $0 \leq i \leq n-d-1$.
\end{itemize}
and $e(S/I) = {n \choose d} - \mu (I)$.
\end{thm}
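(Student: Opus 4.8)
The plan is to extract the shape of the resolution from the homological constraints of Section~\ref{Section3} and then pin down the graded Betti numbers via the Euler-characteristic bookkeeping of Corollary~\ref{Homology of ideals with indeg I = 1+dim} together with the Herzog--Kuhl-type equations of Theorem~\ref{dif-herzog-kuhl}. Since $\C$ is minimal to $d$-linearity, its clique complex $\Delta=\Delta(\C)$ has dimension $d-1$ and ${\rm indeg}\,(I_\Delta)=d$, so Theorem~\ref{betti, reg and C-M with indeg=1+dim} applies: every nonzero $\beta_{i,j}(I)$ has $d\le j-i\le d+1$, and by Corollary~\ref{linearity with indeg=1+dim}, since $I$ does \emph{not} have a $d$-linear resolution, $\tilde H_{d-1}(\Delta;K)\neq 0$; Lemma~\ref{Properties of minimal to linearity}(i) sharpens this to $\dim_K\tilde H_{d-1}(\Delta;K)=1$. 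First I would use Hochster's formula (Theorem~\ref{Hochster Formula}) together with Lemma~\ref{Properties of minimal to linearity}(ii): for a subset $W\subsetneq[n]$ every reduced homology $\tilde H_{j-i-2}(\Delta_W;K)$ with $j-i-2\ge d-2$ vanishes either because $\tilde H_{d-2}$ need not vanish but $\tilde H_{d-1}(\Delta_W;K)=0$ for proper $W$ — wait, one must be careful: the $j-i=d$ strand (coming from $\tilde H_{d-2}$) is the linear strand and is unconstrained, while the $j-i=d+1$ strand comes only from $\tilde H_{d-1}(\Delta_W;K)$, which by Lemma~\ref{Properties of minimal to linearity}(ii) is nonzero only for $W=[n]$, where it is one-dimensional. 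Hence $\beta_{i,j}(I)=0$ whenever $j-i=d+1$ and $j<n$, and $\beta_{n-d,n}(I)=\dim_K\tilde H_{d-1}(\Delta;K)$ is the value in (i) once we invoke Corollary~\ref{Homology of ideals with indeg I = 1+dim}. This forces the displayed shape~(\ref{Resolution of Minimal to linearity+Shape}): in homological degree $i$ only the generators in degree $i+d$ survive, except at the very end where a single extra generator in degree $n$ appears.

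Next I would compute the linear-strand Betti numbers $\beta_{i,i+d}(I)$. The module $M=I=I_\Delta$ has $\dim S/I = d$ and $\depth S/I$: since $\beta_{n-d,n}(S/I)=\beta_{(n-d)+1,n}(S/I)\neq 0$ by Hochster, Theorem~\ref{betti, reg and C-M with indeg=1+dim}(iv) and Theorem~\ref{Cohen_macaulayness with indeg=1+dim} give $\projdim S/I=(n-d)+1$, i.e. $\depth S/I=d-1=\dim S/I-1$. Thus $I$ satisfies the hypotheses of Theorem~\ref{dif-herzog-kuhl}(iii) with $\rho=\projdim I=n-d$, degree sequence $d_0=0$ (the free module $S$ surjecting onto $I$ has this convention) — more precisely, writing the resolution of $I$ as~(\ref{Resolution of Minimal to linearity+Shape}), the relevant degree vector for Theorem~\ref{dif-herzog-kuhl}(iii) is $\mathbf d=(0,d,d+1,\dots)$, wait, the cleanest route is to apply part~(iii) to $I$ directly with the convention of the quoted theorem, reading off $\beta_i'=\beta_{i,d_i}-\beta_{i-1,d_i}$. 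Because the resolution~(\ref{Resolution of Minimal to linearity+Shape}) has at most one generator and one relation in each internal homological degree (the strand with $j-i=d$ is "pure" except for the terminal $S(-n)$ and $S^{\beta_{n-d,n}}(-n)$), the overlap terms $\beta_{i-1,i+d}$ vanish, so $\beta_i'=\beta_{i,i+d}(I)$ for $1\le i\le n-d-1$ and the formula of Theorem~\ref{dif-herzog-kuhl}(iii) evaluates the product $\prod_{k\neq i}$ over $d_k=d,d+1,\dots$ to the stated binomial-times-bracket expression. I would carry out this evaluation: with $d_k-d_i$ telescoping into factorials one gets $\binom{n-d}{i}$, and the multiplicity term $e(S/I)=f_{d-1}=|\C|$ combines with $\frac{d}{d+i}\binom{n}{d}$ exactly as in (ii); the identity $e(S/I)=\binom{n}{d}-\mu(I)$ is immediate since $\C$ and $\bar\C$ partition $\C_{n,d}$ and $\mu(I)=|\bar\C|$, $f_{d-1}(\Delta)=|\C|$.

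Finally, for (i) I would substitute $d=d-1+1=\dim\Delta+1$ into Corollary~\ref{Homology of ideals with indeg I = 1+dim}: it gives $\dim_K\tilde H_{d-2}(\Delta;K)-\dim_K\tilde H_{d-1}(\Delta;K)=\sum_{i=0}^{d-1}(-1)^{d+i-1}\binom{n}{i}-e(S/I)$; combined with $\dim_K\tilde H_{d-1}(\Delta;K)=\beta_{n-d,n}(I)$ (Hochster) this is not quite the claim — I need $\tilde H_{d-2}(\Delta;K)$, i.e. $\beta_{n-d-1,n}(I)$, to be handled. Here is where I expect the main obstacle: isolating $\beta_{n-d,n}(I)=\dim_K\tilde H_{d-1}(\Delta;K)$ requires knowing $\dim_K\tilde H_{d-2}(\Delta;K)$, or equivalently I must instead derive (i) by a \emph{global} Euler-characteristic count on the alternating sum of \emph{all} Betti numbers of $S/I$ against its Hilbert series, using the already-established shape and the values of $\beta_{i,i+d}$ from (ii). Concretely, the alternating sum $\sum_i(-1)^i\sum_j\beta_{i,j}(S/I)t^j$ equals $(1-t)^n\,\mathrm{Hilb}(S/I,t)$, and since the only non-linear-strand contribution is $(-1)^{n-d+1}\beta_{n-d,n}(I)\,t^n$ (noting $\beta_{i,j}(S/I)=\beta_{i-1,j}(I)$), evaluating an appropriate coefficient — the top-degree or the leading term after dividing by the known $K$-polynomial — solves for $\beta_{n-d,n}(I)$ in terms of $e(S/I)$ and the $\binom{n}{i}$. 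The bookkeeping in this last step is the only genuinely delicate part; everything else is forced by Section~\ref{Section3} and the two lemmas.
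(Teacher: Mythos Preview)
Your overall strategy matches the paper's: establish the shape of the resolution from Theorem~\ref{betti, reg and C-M with indeg=1+dim} and Lemma~\ref{Properties of minimal to linearity}, read off (ii) from Theorem~\ref{dif-herzog-kuhl}, and get (i) from Hochster plus Corollary~\ref{Homology of ideals with indeg I = 1+dim}. However, you have swapped an index, and this swap is what sends you into the unnecessary Hilbert-series detour at the end.

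By Hochster's formula, $\beta_{i,j}(I_\Delta)=\sum_{|W|=j}\dim_K\tilde H_{j-i-2}(\Delta_W;K)$. Hence
\[
\beta_{n-d,\,n}(I)=\dim_K\tilde H_{d-2}(\Delta;K),\qquad
\beta_{n-d-1,\,n}(I)=\dim_K\tilde H_{d-1}(\Delta;K)=1.
\]
You have these reversed: the single extra $S(-n)$ in the resolution sits in homological degree $n-d-1$ (where $j-i=d+1$, the non-linear strand), and it is this Betti number that equals $\dim_K\tilde H_{d-1}(\Delta;K)=1$; the last module $S^{\beta_{n-d,n}}(-n)$ lies on the \emph{linear} strand ($j-i=d$) and records $\dim_K\tilde H_{d-2}(\Delta;K)$. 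Once you correct this, part~(i) is immediate: Corollary~\ref{Homology of ideals with indeg I = 1+dim} gives
\[
\dim_K\tilde H_{d-2}(\Delta;K)-\dim_K\tilde H_{d-1}(\Delta;K)=\sum_{i=0}^{d-1}(-1)^{d+i-1}\binom{n}{i}-e(S/I),
\]
and substituting $\dim_K\tilde H_{d-1}(\Delta;K)=1$ yields exactly $\beta_{n-d,n}(I)=1-e(S/I)+\sum_{i=0}^{d-1}(-1)^{d+i-1}\binom{n}{i}$. No global Hilbert-series bookkeeping is needed; the ``delicate part'' you anticipate disappears. The rest of your outline (the shape argument and the appeal to Theorem~\ref{dif-herzog-kuhl} for (ii)) is correct and is precisely what the paper does.
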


\begin{proof}
Let $\Delta=\Delta(\C)$ be the clique complex of $\C$. Since ${\rm indeg}\, (I_\Delta) ={\rm indeg}\, I(\bar{\C})= d =1+\dim \Delta$, by Theorem~\ref{betti, reg and C-M with indeg=1+dim}(i) and Lemma~\ref{Properties of minimal to linearity}(ii), $\beta_{i,j}(I) =0$ either $j-i < d$ or $j-i > d+1$ or $j-i=d+1$ and $j<n$. Moreover, we have $\beta_{n-(d+1),n} = \dim_K \tilde{H}_{d-1}(\Delta; K)=1$. Hence the minimal free resolution of $I$ is in the form (\ref{Resolution of Minimal to linearity+Shape}). The equation (ii) comes from Theorem~\ref{dif-herzog-kuhl}. Using Theorem~\ref{Hochster Formula} once again, we have $\beta_{n-d,n}(I) = \dim_K \tilde{H}_{d-2}(\Delta; K)$. Hence (i) comes from Corollary~\ref{Homology of ideals with indeg I = 1+dim}. In order to find the multiplicity, note that $e(S/I) = f_{d-1}(\Delta) = |\C| = {n \choose d} -  \mu (I)$.
\end{proof}

Let $\C$ be a $d$-uniform clutter. The clutter $\C$ is called \textit{strongly connected} (or \textit{connected in codimension one}) if for any two circuits $F,G \in \C$, there exists a chain of circuits $F=F_0, \ldots ,F_s=G$ in $\C$ such that $|F_i \cap F_{i+1}|=d-1$, for $i=0, \ldots, s-1.$

Besides the algebraic properties of the clutters $\C \in \mathscr{C}^{\rm obs}_d$, a combinatorial property of such clutters is that they are strongly connected.

\begin{prop} \label{Strongly connectedness of obstructoin to linearity}
If $\C \in \mathscr{C}^{\rm obs}_d$ be a $d$-uniform clutter, then
\begin{itemize}
\item[\rm (i)] $\C$ is indecomposable.
\item[\rm (ii)] $\C$ is strongly connected.
\end{itemize}

\end{prop}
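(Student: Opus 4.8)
The plan is to prove both parts by contraposition, exploiting the regularity behaviour under decomposition established in Section~\ref{Section4}. For part (i), suppose $\C$ is decomposable, so $\C = \C_1 \uplus \C_2$ with $\C_1, \C_2$ proper $d$-uniform subclutters. Since $\C \in \mathscr{C}^{\rm obs}_d$, every proper subclutter of $\C$ has $d$-linear resolution; in particular $I_1 = I(\bar\C_1)$ and $I_2 = I(\bar\C_2)$ are either zero or have $d$-linear resolution, so $\reg(I_i) \leq d$ in all cases (with the convention $\reg(0) \le d$, or handled by the zero-ideal bullets of Remark~\ref{Remark on regularity of union of clutters}). By Theorem~\ref{betti number with E(C_1) cap E(C_2)= empty}(ii) (and the zero-ideal cases of Remark~\ref{Remark on regularity of union of clutters}), $\reg(I) = \max\{\reg(I_1), \reg(I_2)\} \le d$, so $I = I(\bar\C)$ has $d$-linear resolution, contradicting $\C \in \mathscr{C}^{\rm obs}_d$. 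Hence $\C$ is indecomposable.

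For part (ii), I would argue that strong connectedness is essentially the combinatorial negation of a decomposition via disjoint submaximal circuit sets. Define a graph on the circuit set $\C$ by joining $F, G$ when $|F \cap G| = d-1$; $\C$ is strongly connected iff this graph is connected. If it were disconnected, partition $\C = \C_1 \sqcup \C_2$ into a union of connected components (grouped into two nonempty parts). Then no circuit of $\C_1$ shares a $(d-1)$-subset with any circuit of $\C_2$, which is precisely the statement ${\rm SC}(\C_1) \cap {\rm SC}(\C_2) = \varnothing$: a submaximal circuit $e$ lying in both would be a common $(d-1)$-face of some $F \in \C_1$ and some $G \in \C_2$, forcing $|F \cap G| \geq d-1$ hence $=d-1$ (as $F \ne G$ are distinct $d$-sets), so $F$ and $G$ would be adjacent, contradicting the split into components. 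Therefore $\C = \C_1 \uplus \C_2$ is a decomposition, contradicting part (i). So $\C$ is strongly connected.

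The main point to get right is the bookkeeping around \emph{zero ideals} in part (i): when $\C_1$ or $\C_2$ has no complement circuits, $I_i = 0$ and Theorem~\ref{betti number with E(C_1) cap E(C_2)= empty}(ii) does not literally apply, so one must invoke the first two bullets of Remark~\ref{Remark on regularity of union of clutters} instead (if both are zero, $I$ has $d$-linear resolution directly; if exactly one is zero, $\reg(I) = \reg(I_2) \le d$). The other subtle point, in part (ii), is that a decomposition in the sense of Definition requires $\C_1, \C_2$ to be \emph{proper} subclutters with $\C = \C_1 \cup \C_2$; grouping the connected components of the adjacency graph into two nonempty families makes both subclutters nonempty and proper, and their union is all of $\C$, so the definition is met. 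Everything else is routine once these edge cases are dispatched.
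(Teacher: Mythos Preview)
Your proof is correct and follows essentially the same route as the paper: part (i) is proved by contraposition via Theorem~\ref{betti number with E(C_1) cap E(C_2)= empty} and Remark~\ref{Remark on regularity of union of clutters}, and part (ii) reduces failure of strong connectedness to a decomposition with ${\rm SC}(\C_1)\cap{\rm SC}(\C_2)=\varnothing$, contradicting (i). The only cosmetic difference is that the paper phrases (ii) by taking a maximal strongly connected subclutter $\C_1$ and setting $\C_2=\C\setminus\C_1$, whereas you build the adjacency graph on circuits and split its components---these are equivalent formulations of the same argument.
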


\begin{proof}
Let $\C= \C_1 \uplus \C_2$ where $\C_1$ and $\C_2$ are proper subclutters of $\C$. By definition, the ideals $I_1=I(\bar{\C}_1)$ and $I_2=I(\bar{\C}_2)$ have $d$-linear resolutions. In view of Remark~\ref{Remark on regularity of union of clutters}, the ideal $I(\bar{\C})$ has $d$-linear resolution which is a contradiction.

(ii) Let $\C_1 \subset \C$ be the maximal subclutter (w.r.t. inclusion) of $\C$ which is strongly connected. Clearly, $\C_1 \neq \varnothing$, because every clutter with one circuit is strongly connected.

Assume that $\C_1 \varsubsetneq \C$ and let $\C_2 = \C \setminus \C_1$. By the maximality of $\C_1$, ${\rm SC}(\C_1) \cap {\rm SC}(\C_2) = \varnothing$, that is $\C= \C_1 \uplus \C_2$ which contradicts to (i). So that $\C_1= \C$ is strongly connected.
\end{proof}



\begin{lem} \label{C(iso,d) is subset of C(min, d)}
Let $\C$ be a $d$-uniform clutter which is a tree or almost tree and $\Delta = \Delta(\C)$ be the clique complex of $\C$. Then, $\dim \Delta =d-1$. In particular, $\mathscr{C}^{\rm a.tree}_d \subset \mathscr{C}^{\min}_d$.
\end{lem}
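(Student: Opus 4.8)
The plan is to prove $\dim\Delta(\C)=d-1$ by the two opposite inequalities. The bound $\dim\Delta(\C)\ge d-1$ is immediate once we observe that $\C\ne\varnothing$: a tree is connected, and if $\C\in\mathscr{C}^{\rm a.tree}_d$ then $\C\ne\varnothing$, for otherwise $\bar{\C}=\C_{n,d}$ and $I(\bar{\C})=I(\C_{n,d})$ has a $d$-linear resolution. Since $\C\ne\varnothing$ it has a $d$-circuit $F$, and $F$ is vacuously a clique, hence a face of $\Delta(\C)$ of dimension $d-1$. Thus everything reduces to showing that $\C$ contains no clique on $d+1$ vertices.

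So suppose, for contradiction, that $W\subseteq[n]$ is a clique with $|W|=d+1$. Then all $d+1$ of the $d$-subsets of $W$ are circuits of $\C$, and consequently the induced subclutter $\C_W$ is the complete $d$-uniform clutter on $W$, a relabelling of $\C_{d+1,d}$; in particular $\C_W$, viewed as a set of circuits, is a subclutter of $\C$. The one elementary fact I need is that $\C_{d+1,d}$ has no submaximal circuit of degree one: a submaximal circuit is a $(d-1)$-subset $e\subseteq W$, and $e$ extends to a $d$-subset of $W$ in exactly $|W\setminus e|=2$ ways, each of which is a circuit of $\C_W$, so $\deg_{\C_W}(e)=2$ for every such $e$. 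Hence $\C_{d+1,d}$ is not a tree.

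Now I derive the contradiction. If $\C$ is a tree, then by definition every subclutter of $\C$ has a submaximal circuit of degree one; applying this to $\C_W=\C_{d+1,d}$ contradicts the previous paragraph. If $\C$ is almost tree, there are two cases. If $\C_W\subsetneq\C$, then $\C_W$ is a proper subclutter and hence a tree, again a contradiction. If $\C_W=\C$, then $\C$ is itself the complete $d$-uniform clutter on a $(d+1)$-element set (the remaining vertices of $[n]$, if any, being isolated); but for such $\C$ a direct computation with Hochster's formula shows that all reduced homology of every $\Delta(\C)_V$ is concentrated in degree $d-2$, so $I(\bar{\C})$ has a $d$-linear resolution (and $I(\bar{\C})=0$ when $n=d+1$), contradicting the standing hypothesis defining $\mathscr{C}^{\rm a.tree}_d$. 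Hence no such $W$ exists, and $\dim\Delta(\C)=d-1$.

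Finally, for $\mathscr{C}^{\rm a.tree}_d\subseteq\mathscr{C}^{\min}_d$: if $\C\in\mathscr{C}^{\rm a.tree}_d$ then every proper subclutter of $\C$ is a tree, hence a forest, hence has a $d$-linear circuit ideal by Remark~3.10 of \cite{maar}; together with the hypothesis that $I(\bar{\C})$ is not $d$-linear this says $\C\in\mathscr{C}^{\rm obs}_d$, and combined with $\dim\Delta(\C)=d-1$ just proved we get $\C\in\mathscr{C}^{\min}_d$. The only genuinely delicate point in the whole argument is the degenerate branch $\C=\C_{d+1,d}$ in the almost-tree case; everything else is the one-line degree count for $\C_{d+1,d}$ together with unwinding the definitions of tree and almost tree.
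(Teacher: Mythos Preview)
Your proof is correct and follows essentially the same route as the paper's: assume a clique $W$ of size $>d-1+1=d$ exists, observe that in the induced subclutter $\C_W$ every submaximal circuit has degree at least $2$, and contradict the tree property. The paper works with an arbitrary face $G$ of $\Delta$ with $|G|>d$ rather than restricting to $|W|=d+1$, but this is immaterial.

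The one point where you are actually more careful than the paper is the degenerate branch $\C_W=\C$ in the almost-tree case. The paper simply asserts ``$\C_V$ has a submaximal circuit of degree $1$'' without separating out this case; strictly speaking that assertion only follows from the definition when $\C_V$ is a \emph{proper} subclutter (hence a tree). You close this gap by arguing that if $\C=\C_W$ then $\C$ is the complete $d$-uniform clutter on a $(d+1)$-set, whose circuit ideal has $d$-linear resolution, contradicting the standing hypothesis in the definition of $\mathscr{C}^{\rm a.tree}_d$. Your homology computation for this case is correct: for any $V\subseteq[n]$ the restriction $\Delta(\C)_V$ is the union of the $(d-2)$-skeleton on $V$ with the simplex on $W\cap V$, and a short Mayer--Vietoris argument (using that the inclusion of $(d-2)$-skeleta induces an injection on $\tilde H_{d-2}$) shows $\tilde H_i(\Delta(\C)_V;K)=0$ for $i\ge d-1$. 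So your treatment is slightly more complete than the paper's at this point, but otherwise the two proofs coincide.
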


\begin{proof}
If $G \in \Delta$ and $|G|>d$ and $V$ is the vertex set of $G$, then $\C_V=\{F \in \C \colon F \subset G \}$. Hence for all $e \in {\rm SC}(\C_V)$, $\deg_{\C_V}(e) \geq 2$. This contradicts to the fact that $\C_V$ has submaximal circuit of degree 1. So that all faces of $\Delta(\C)$ have at most $d$ elements. Since $\C \subset \Delta$, we conclude that $\dim \Delta = d-1$.

If $\C \in \mathscr{C}^{\rm a.tree}_d$, then by what we have already proved, we know that $\dim \Delta(\C) = d-1$. Also, the argument before Definition~\ref{Definition of minimal to linearity} implies that for every proper subclutter $\C' \varsubsetneq \C$, the ideal $I(\bar{\C}')$ has a linear resolution. Hence $\C \in \mathscr{C}^{\min}_d$.
\end{proof}

We have shown that $\mathscr{C}^{\rm a.tree}_d\subseteq\mathscr{C}^{\min}_d\subseteq\mathscr{C}^{\rm obs}_d$. All our evidences and computations lead us to make the following conjecture.

\begin{conj}
$\mathscr{C}^{\rm a.tree}_d=\mathscr{C}^{\min}_d=\mathscr{C}^{\rm obs}_d.$
\end{conj}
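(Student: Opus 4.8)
To approach this conjecture, note first that since the inclusions $\mathscr{C}^{\rm a.tree}_d\subseteq\mathscr{C}^{\min}_d\subseteq\mathscr{C}^{\rm obs}_d$ are already established, it remains to prove the two reverse inclusions $\mathscr{C}^{\rm obs}_d\subseteq\mathscr{C}^{\min}_d$ and $\mathscr{C}^{\min}_d\subseteq\mathscr{C}^{\rm a.tree}_d$, which I would treat separately.

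For $\mathscr{C}^{\rm obs}_d\subseteq\mathscr{C}^{\min}_d$ the content is that an obstruction clutter $\C$ cannot have a clique of size $\geq d+1$, i.e. $\dim\Delta(\C)=d-1$. The plan is to argue by contradiction: if $V=\{v_0,\dots,v_d\}$ were such a clique, put $F=V\setminus\{v_0\}\in\C$ and $\C'=\C\setminus\{F\}$, a proper subclutter, so $I(\bar{\C'})$ has a $d$-linear resolution by hypothesis. It then suffices to show that re-adjoining the single facet $F$ of the would-be $(d+1)$-clique cannot raise the regularity, so that $I(\bar\C)$ is $d$-linear as well, contradicting $\C\in\mathscr{C}^{\rm obs}_d$. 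For this, one checks via Proposition~\ref{I_Delta = I (bar C)} that $\Delta(\C)$ is obtained from $\Delta(\C')$ by gluing on exactly the faces containing $F$ --- the (contractible) closed star of $F$, which meets $\Delta(\C')$ in the boundary of that star --- and then feeds this decomposition and its restrictions to all $\Delta(\C)_W$ through the reduced Mayer--Vietoris sequence~(\ref{Reduced Mayer-Vietoris sequence}), using the $d$-linearity of the proper subclutters of $\C$ to control the reduced homology of the links of $F$, to conclude $\tilde{H}_k(\Delta(\C)_W;K)\cong\tilde{H}_k(\Delta(\C')_W;K)$ for all $W\subseteq[n]$ and $k>d-2$; Theorem~\ref{Hochster Formula} then gives $\beta_{i,j}(I(\bar\C))=\beta_{i,j}(I(\bar{\C'}))$ for $j-i>d$. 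This is morally an application of the simplicial submaximal circuit theorem of \cite{maar}, and the points to verify are that the closed-star decomposition behaves well on every induced subcomplex and that the clique $V$ furnishes a simplicial submaximal circuit in the precise sense of that paper.

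For $\mathscr{C}^{\min}_d\subseteq\mathscr{C}^{\rm a.tree}_d$, take $\C\in\mathscr{C}^{\min}_d$. By definition every proper subclutter of $\C$ has a $d$-linear resolution, equivalently (Corollary~\ref{linearity with indeg=1+dim}) $\tilde{H}_{d-1}(\Delta(\mathcal D);K)=0$ for every $\mathcal D\varsubsetneq\C$ (here $\Delta(\mathcal D)\subseteq\Delta(\C)$ and $\mathcal D$ is $d$-uniform, so $\dim\Delta(\mathcal D)=d-1$), while $\tilde{H}_{d-1}(\Delta(\C);K)\neq0$ by Lemma~\ref{Properties of minimal to linearity}(i). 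One must deduce that every proper subclutter $\mathcal D$ of $\C$ is a tree: that $\mathcal D$ is connected and has a submaximal circuit of degree one. Connectedness should follow from strong connectedness of $\C$ (Proposition~\ref{Strongly connectedness of obstructoin to linearity}) together with the fact that a disconnected $\mathcal D$ splits as $\mathcal D=\mathcal D_1\uplus\mathcal D_2$; one has to be careful about which circuits are deleted. The degree-one condition is the crux: the goal is to show that if every submaximal circuit of some $\mathcal D\subseteq\C$ had degree $\geq 2$, then, by peeling circuits off $\mathcal D$ and following the collapses induced on $\Delta(\mathcal D)$, one would reach a proper subclutter of $\C$ whose clique complex carries a non-trivial $(d-1)$-cycle over $K$ --- contradicting that all proper subclutters of $\C$ are $d$-linear.

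I expect this last step to be the main obstacle; it is essentially the substance of the conjecture, for which at present only computational evidence is available. A natural attack is induction on $|\C|$ combined with the decomposition machinery of Section~\ref{Section4}: if $\mathcal D$ is decomposable, pass to a component; if $\mathcal D$ is indecomposable, exploit its strong connectedness and a discrete-Morse / collapsibility argument on $\Delta(\mathcal D)$ whose elementary moves are governed precisely by degree-one submaximal circuits, the point being that the collapse should terminate at a single $(d-1)$-cell exactly when $\tilde{H}_{d-1}$ vanishes. Carrying out such a collapse argument uniformly over all fields $K$ --- recalling that $d$-linearity for $d>2$ genuinely depends on the characteristic of $K$ --- and proving that every proper subclutter of a clutter minimal to $d$-linearity is connected are the two gaps that would remain.
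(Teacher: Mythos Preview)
The statement you are attempting to prove is explicitly labelled a \emph{Conjecture} in the paper; the authors offer no proof, only the sentence ``All our evidences and computations lead us to make the following conjecture.'' So there is no argument in the paper to compare your proposal against, and the equalities $\mathscr{C}^{\rm a.tree}_d=\mathscr{C}^{\min}_d=\mathscr{C}^{\rm obs}_d$ remain open as far as the paper is concerned.

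Your write-up is therefore best read as a proof \emph{strategy}, and indeed you yourself flag the decisive step --- producing a degree-one submaximal circuit in every proper subclutter of a clutter in $\mathscr{C}^{\min}_d$ via a collapse/discrete-Morse argument --- as ``essentially the substance of the conjecture, for which at present only computational evidence is available.'' That assessment matches the paper's. Two additional soft spots are worth naming. First, for $\mathscr{C}^{\rm obs}_d\subseteq\mathscr{C}^{\min}_d$, invoking the simplicial submaximal circuit theorem of \cite{maar} requires that the face $F$ you delete from the $(d{+}1)$-clique actually satisfy that paper's hypotheses; a single circuit removed from a large clique need not be governed by a simplicial submaximal circuit, and the Mayer--Vietoris bookkeeping over \emph{all} induced $W$ does not follow automatically from the single decomposition you describe. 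Second, the definition of ``almost tree'' demands that every proper subclutter be a \emph{tree}, hence connected; but proper subclutters of a clutter in $\mathscr{C}^{\min}_d$ can certainly be disconnected (take two far-apart facets in a large triangulated sphere), so either the target class must be read with ``forest'' in place of ``tree'' or an extra argument is required. Neither of these is addressed by the paper, which simply records the conjecture.
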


\section{Some Applications}\label{Section6}
{\it Fr\"{o}berg's Theorem}

Let $G$ be a simple graph ($2$-uniform clutter). Fr\"{o}berg \cite{Fr}, has proved that the ideal $I(\bar{G})$ has $2$-linear resolution if and only if $G$ is a chordal graph. A graph is called \textit{chordal} if each cycle in $G$ has a chord i.e. any minimal induced cycle in $G$ is  of length 3. In this section, we will present an alternative proof for this theorem.

Let $C_n$ be a cycle of length $n>3$. Though that the Betti numbers of the circuit ideal of $C_n$ is well-known, we can recover them using results of this paper.

Let $\Delta=\Delta(C_n)$ be the clique complex of $C_n$ and $I=I(\bar{C}_n)$ be the circuit ideal. Then ${\rm indeg}\, I_\Delta = 1+\dim \Delta$ and by Corollary~\ref{Homology of ideals with indeg I = 1+dim}, $\dim \tilde{H}_1(\Delta; K) =1$. In particular, $I$ does not have linear resolution (Corollary~\ref{linearity with indeg=1+dim}) and $C_n$ is minimal to $2$-linearity (Lemma~\ref{C(iso,d) is subset of C(min, d)}). Moreover, By Theorem~\ref{Resolution of Minimal to linearity}, the minimal free resolution of $I$ is
\begin{align*}
0 \to S(-n) \to S^{\beta_{n-4,n-2}}(-(n-2)) \to \cdots \to S^{\beta_{1,3}}(-3) \to S^{\beta_{0,2}}(-2) \to I \to 0
\end{align*}
where $\beta_{i, i+2}(I)= n {n-2 \choose i} \left( \frac{n-3-i}{2+i} \right)$ for $0 \leq i \leq n-4$.

So that, if a graph $G$ has a cycle as an induced subgraph, then by Theorem~\ref{Hochster Formula}, the ideal $I(\bar{G})$ does not have linear resolution. This means that, the ideal $I(\bar{G})$ does not have linear resolution if $G$ is not chordal.

Conversely, if $G \neq \C_{n,2}$ is chordal, then by Dirac Theorem \cite{Dirac} (see also \cite[Lemma 9.2.1]{HHBook}), there exists proper induced subgraphs $G_1$ and $G_2$ such that $G = G_1 \uplus G_2$. Since $G_1$ and $G_2$ are induced subgraphs of a chordal graph $G$, we conclude that $G_1$ and $G_2$ are chordal. Hence induction and Remark~\ref{Remark on regularity of union of clutters}, implies that the ideal $I(\bar{G})$ has a $2$-linear resolution.

\medskip
\noindent{\it Generalized Chordal Clutters}

E. Emtander \cite{Emtander} has defined generalized chordal clutters as the following.

\begin{defn} \label{Emtander chordal} \rm
A \textit{generalized chordal clutter} is a $d$-uniform clutter, obtained inductively as follows:
\begin{itemize}
\item[\rm (a)] $\C_{n,d}$ is a generalized chordal clutter.
\item[\rm (b)] If $\mathcal{G}$ is generalized chordal clutter, then so is $\C = \mathcal{G} \cup_{\C_{i,d}} {\C_{n,d}}$ for all $0 \leq i <n$.
\item[\rm (c)] If $\mathcal{G}$ is generalized chordal and $V \subset V(\mathcal{G})$ is a finite set with $|V| = d$ and at least one element of $\{F \subset V: |F|=d-1\}$ is not a subset of any element of $\mathcal{G}$, then $\mathcal{G} \cup V$ is generalized chordal.
\end{itemize}
\end{defn}

Emtander has proved that the circuit ideal of generalized chordal clutters have $d$-linear resolution over any field $K$ (c.f. \cite[Theorem 5.1]{Emtander}). We can recover this result as an special case of Theorem~\ref{betti number with E(C_1) cap E(C_2)= empty}.

Let $\C$ be a generalized chordal clutter. If $\C$ has a circuit $F$, with property (c) in the above definition, then Remark 3.10 of \cite{maar} together with induction, implies that $I(\bar{\C})$ has a $d$-linear resolution. So we may assume that $\C= \mathcal{G} \cup_{\C_{i,d}} \C_{n,d}$. Again, in this case, Remark~\ref{Remark on regularity of union of clutters} together with induction, implies that the ideal $I(\bar{\C})$ has a $d$-linear resolution over the field $K$.

\medskip
\noindent{\it Resolution of Pseudo-Manifolds}
\vspace{-0.15cm}
\begin{defn} \rm
A $d$-uniform clutter $\C$ is called a \textit{pseudo-manifold}, if $\C$ is strongly connected and each $e \in {\rm SC}(\C)$ has degree 2.
\end{defn}

For more details on pseudo-manifolds and the concept of orientability, refer to \cite{Massey} Chapter IX.

\begin{lem} \label{proper subclutter of Pseudo-manifolds}
Let $\C$ be a $d$-uniform clutter such that $\deg_\C(e)=2$ for all $e \in {\rm SC}(\C)$. Then, every proper subclutter of $\C$ has a submaximal circuit of degree 1 if and only if $\C$ is strongly connected. In particular, every proper subclutter of a pseudo-manifold  is a tree.
\end{lem}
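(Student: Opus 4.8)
The plan is to prove the two implications of the stated equivalence separately, in each case exploiting the hypothesis $\deg_\C(e)\le 2$ for all $e\in{\rm SC}(\C)$ in the following form: if a submaximal circuit $e$ lies in a subclutter $\C'$ and has $\deg_{\C'}(e)\ge 2$, then since $\deg_{\C'}(e)\le\deg_\C(e)=2$ we must have $\deg_{\C'}(e)=2$, i.e. \emph{both} of the two circuits of $\C$ through $e$ already lie in $\C'$. This rigidity is what drives both directions.

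First I would show: if $\C$ is strongly connected, then every proper subclutter $\C'\subsetneq\C$ has a submaximal circuit of degree $1$. Suppose not, so every $e\in{\rm SC}(\C')$ has $\deg_{\C'}(e)\ge 2$, hence, by the remark above, $\deg_{\C'}(e)=2$. Choose $F\in\C'$ and $G\in\C\setminus\C'$, and use strong connectedness of $\C$ to get a chain $F=F_0,F_1,\dots,F_s=G$ in $\C$ with $|F_i\cap F_{i+1}|=d-1$. Let $k$ be the least index with $F_k\notin\C'$; then $1\le k\le s$, $F_{k-1}\in\C'$, and $e:=F_{k-1}\cap F_k$ is a $(d-1)$-set contained in $F_{k-1}\in\C'$, so $e\in{\rm SC}(\C')$. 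Since $F_{k-1}\ne F_k$ (their intersection has size $d-1<d$), both contain $e$, and $\deg_\C(e)=2$, the two circuits of $\C$ through $e$ are precisely $F_{k-1}$ and $F_k$; but $\deg_{\C'}(e)=2$ forces $F_k\in\C'$, a contradiction. Hence some $e\in{\rm SC}(\C')$ has $\deg_{\C'}(e)=1$.

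For the converse I would argue the contrapositive: if $\C$ is not strongly connected, exhibit a proper subclutter in which every submaximal circuit has degree $\ge 2$. Let $\C_1$ be a connected-in-codimension-one component of $\C$, i.e. an equivalence class of circuits under the relation ``joined by a chain of circuits consecutively meeting in $(d-1)$-sets''; since $\C$ is not strongly connected, $\C_1\subsetneq\C$. Given $e\in{\rm SC}(\C_1)$, pick $F\in\C_1$ with $e\subset F$; as $\deg_\C(e)=2$ there is a second circuit $G\in\C$, $G\ne F$, with $e\subset G$, whence $|F\cap G|=d-1$, so $G$ lies in the same component, $G\in\C_1$. Thus $\deg_{\C_1}(e)\ge 2$ for all $e\in{\rm SC}(\C_1)$, so $\C_1$ is a proper subclutter with no submaximal circuit of degree $1$. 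This finishes the equivalence.

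For the ``in particular'' clause, let $\C$ be a pseudo-manifold (so strongly connected with all submaximal circuits of degree $2$) and $\C'\subsetneq\C$; any subclutter of $\C'$ is again a proper subclutter of $\C$ (being equal to $\C$ would force $\C'=\C$), hence by the equivalence it has a submaximal circuit of degree $1$, so each connected component of $\C'$ is a tree. I expect the only real delicacy to be the bookkeeping in the chain argument — picking the first index that exits $\C'$ and checking that the two circuits of $\C$ through the resulting $(d-1)$-set are exactly the two consecutive chain members — plus dispatching the harmless degenerate conventions (nonempty subclutters, $d\ge 2$).
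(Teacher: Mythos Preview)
Your proof is correct and follows essentially the same approach as the paper's: both directions use the strongly-connected component and a chain-walking argument under the constraint $\deg_\C(e)=2$. Your presentation is slightly tidier (explicitly picking the first index exiting $\C'$, and phrasing the converse as a direct contrapositive construction rather than a contradiction), and your remark that a proper subclutter need only be a forest rather than a connected tree is in fact more precise than the paper's statement of the ``in particular'' clause.
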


\begin{proof}
($\Rightarrow$) Let $F \in \C$ and $\C_1$ be a maximal subclutter of $\C$ which consists of all $G \in \C$ such that there exists a chain $F=F_0, F_1, \ldots, F_r=G$ of circuits of $\C$ such that $|F_i \cap F_{i+1}|=d-1$ for $i=0, \ldots, r-1$.

If $\C_1 \varsubsetneq \C$, then $\C_1$ has a submaximal circuit $e$ of degree 1. By the maximality of $\C_1$, we have:
$$1 = \deg_{\C_1}(e) = \deg_\C(e).$$
This contradicts to our assumption on $\C$.

($\Leftarrow$) Let $\C' \varsubsetneq \C$ such that $\deg_{\C'}(e) =2 =\deg_{\C}(e)$ for all $e \in {\rm SC}(\C')$. Take $F \in \C'$ and $G \in \C \setminus \C'$. By our assumption, there exist a chain $F=F_0, F_1, \ldots, F_r=G$ of circuits of $\C$ such that $|F_i \cap F_{i+1}|=d-1$ for $i=0, \ldots, r-1$.

Since $F_0 = F \in \C'$ and $|F_0 \cap F_1|=d-1$, we conclude that $F_0 \cap F_1 \in {\rm SC}(\C')$. Hence, by our assumption, $\deg_{\C'} (F_0 \cap F_1) =2$ which implies that $F_1 \in \C'$. The same argument shows that $F_0, F_1, \ldots, F_r$ are in $\C'$. This is a contradiction by our choice of $F_r=G$.
\end{proof}

\begin{rem} \label{Remark for Pseudo-manifolds are minimal to linearity}\rm
Let $\C$ be a $d$-uniform pseudo-manifold and $\Delta=\Delta(\C)$ be the clique complex of $\C$. In view of Lemmas~\ref{proper subclutter of Pseudo-manifolds} and \ref{C(iso,d) is subset of C(min, d)}, we have:
\begin{itemize}
\item[\rm (a)] Every proper subclutter of $\C$ has a submaximal circuit of degree 1.
\item[\rm (b)] ${\rm indeg}\, (I_\Delta) = 1+ \dim \Delta$.
\end{itemize}
Putting together these results, Corollary~\ref{linearity with indeg=1+dim} implies that:
\begin{center}
$I(\bar{\C})$ is minimal to $d$-linearity if and only if  $\tilde{H}_{d-1}(\Delta; K) \neq 0$.
\end{center}
\end{rem}

\begin{prop} \label{Pseudo-manifolds are minimal to linearity}
Let $\C$ be a $d$-uniform clutter.
\begin{itemize}
\item[\rm (i)] If $\C$ is oriented pseudo-manifold, then $\C$ is minimal to $d$-linearity.
\item[\rm (ii)] If $\C$ is non-oriented pseudo-manifold, then $\C$ is minimal to $d$-linearity if and only if ${\rm Char}(K)=2$.
\end{itemize}
\end{prop}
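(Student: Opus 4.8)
The plan is to reduce everything to a homological computation via Remark~\ref{Remark for Pseudo-manifolds are minimal to linearity}, which already tells us that for a $d$-uniform pseudo-manifold $\C$ with clique complex $\Delta = \Delta(\C)$, the clutter is minimal to $d$-linearity if and only if $\tilde{H}_{d-1}(\Delta; K) \neq 0$. So both parts come down to deciding when the top reduced homology of $\Delta$ vanishes. First I would observe that since $\C$ is a pseudo-manifold, ${\rm indeg}\,(I_\Delta) = 1 + \dim \Delta = d$ (part (b) of that remark), so $\Delta$ contains every $(d-2)$-face on $[n]$, and $\C = \mathcal{F}(\Delta^{(d-1)})$ is precisely the set of top faces by Proposition~\ref{I_Delta = I (bar C)}(i). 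Hence the top chain group $\C_{d-1}$ is the free $K$-module on the circuits of $\C$, $\C_{d-2}$ is free on all $(d-1)$-subsets, and $\tilde{H}_{d-1}(\Delta;K) = \ker \partial_{d-1}$.

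For part (i), I would fix an orientation witnessing that $\C$ is oriented: orienting the facets coherently means there is an assignment of signs $\varepsilon_F \in \{\pm 1\}$ to the oriented circuits $F \in \C$ such that the chain $z = \sum_{F \in \C} \varepsilon_F F$ satisfies $\partial_{d-1} z = 0$. Indeed, each $e \in {\rm SC}(\C)$ lies in exactly two circuits $F, G$ (degree $2$), and coherent orientation is exactly the statement that the two contributions of $e$ in $\partial_{d-1} z$ cancel. Thus $z$ is a nonzero cycle, so $\tilde{H}_{d-1}(\Delta; K) \neq 0$ regardless of the characteristic, and Remark~\ref{Remark for Pseudo-manifolds are minimal to linearity} finishes it. (One should note that this also shows $z$ spans the top homology, by the same "support equals all of $\C$" argument used in Lemma~\ref{Properties of minimal to linearity}(i): any nonzero top cycle must involve every circuit, because its support is a subclutter closed under the degree-$2$ cancellation along submaximal circuits, hence by strong connectedness equals $\C$; so $\dim_K \tilde{H}_{d-1}(\Delta;K) = 1$.)

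For part (ii), suppose $\C$ is a pseudo-manifold that admits no coherent orientation. If ${\rm Char}(K) = 2$, signs are irrelevant: $z = \sum_{F \in \C} F$ has $\partial_{d-1} z = \sum_{e} (\deg_\C e) e = \sum_e 2 e = 0$ in characteristic $2$, so again $\tilde{H}_{d-1}(\Delta;K) \neq 0$ and $\C$ is minimal to $d$-linearity. Conversely, if ${\rm Char}(K) \neq 2$, I claim $\ker \partial_{d-1} = 0$. Take $w = \sum_{F} c_F F$ with $\partial_{d-1} w = 0$; by the connectivity/support argument its support is all of $\C$, so every $c_F \neq 0$. For each submaximal circuit $e$ in the two circuits $F,G$, the coefficient of $e$ in $\partial_{d-1} w$ forces $c_G = \pm c_F$ with the sign dictated purely by the incidence numbers. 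Propagating this along the strongly connected "dual graph" of $\C$, all $|c_F|$ are equal (say to $1$ after scaling), and the resulting sign pattern is exactly a coherent orientation of $\C$ — contradicting non-orientability. Hence $\tilde{H}_{d-1}(\Delta;K) = 0$ and $\C$ is not minimal to $d$-linearity.

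The main obstacle is making the orientation bookkeeping in part (ii) fully precise: one must check that the sign constraints $c_G = \pm c_F$ extracted from $\partial_{d-1}$ along the chain of circuits are consistent around every "cycle" in the dual adjacency graph (so that a global solution exists only when a coherent orientation exists), and conversely that a nonzero element of $\ker\partial_{d-1}$ yields a genuine orientation in the sense of the reference \cite{Massey}. This is where strong connectedness and the degree-$2$ condition are both essential, and where a careful definition of "oriented pseudo-manifold" (as in \cite{Massey}, Chapter IX) must be matched up with the chain-level condition $\partial_{d-1}\bigl(\sum_F \varepsilon_F F\bigr)=0$.
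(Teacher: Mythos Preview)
Your argument is correct and follows the same overall strategy as the paper: reduce via Remark~\ref{Remark for Pseudo-manifolds are minimal to linearity} to the question of whether $\tilde{H}_{d-1}(\Delta;K)\neq 0$. The difference lies in how this top homology is computed. The paper simply invokes the classical formula for the top homology of a pseudo-manifold with field coefficients (citing \cite{Massey} and \cite{Munkres}):
\[
\tilde{H}_{d-1}(\langle\C\rangle;K)\cong
\begin{cases}
K, & \C\ \text{orientable},\\
{\rm Tor}(\ZZ_2,K), & \C\ \text{non-orientable},
\end{cases}
\]
and reads off the answer. You instead reprove this fact at the chain level: build the fundamental cycle $\sum_F\varepsilon_F F$ from a coherent orientation, use the degree-$2$ condition to see that $\sum_F F$ is a cycle in characteristic~$2$, and in characteristic $\neq 2$ show that any nonzero top cycle would force a coherent orientation. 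This is a genuinely more elementary and self-contained route; the trade-off is that you must do the bookkeeping that the references already package. One small point to tighten: the phrase ``all $|c_F|$ are equal'' is informal over a general field; the clean statement is that after scaling so that some $c_{F_0}=1$, every $c_F\in\{\pm 1\}$ by propagation along the strongly connected adjacency, and any inconsistency around a loop forces $2c_F=0$, hence $c_F=0$ since ${\rm Char}(K)\neq 2$. You already flag exactly this as the main obstacle, so the plan is sound.
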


\begin{proof}
Let $\C$ be a $d$-uniform pseudo-manifold and $\Delta= \Delta(\C)$ be its clique complex. In view of Lemma~\ref{C(iso,d) is subset of C(min, d)}, we know that $\dim \Delta=d-1$ and $\C= \mathcal{F}(\Delta)$. In particular, $\tilde{H}_{d-1}(\Delta; K) \cong \tilde{H}_{d-1}(\langle \C \rangle; K)$ where $\langle \C \rangle$ is the simplicial complex generated by $\C$. But we know that (see \cite[Chapter X, Exercise 6.5] {Massey} or \cite[\S43, Exercise 5]{Munkres}):
\begin{equation*}
\tilde{H}_{d-1}(\langle \C \rangle; K) = \begin{cases}
K, & \text{if $\C$ is oriented.} \\
{\rm Tor}(\ZZ_2, K), & \text{if $\C$ is non-oriented.}
\end{cases}
\end{equation*}
where ${\rm Tor}(\ZZ_2, K)= \{ a \in K \colon \quad 2.a =0 \}$. Now, the conclusion follows from Remark~\ref{Remark for Pseudo-manifolds are minimal to linearity}.
\end{proof}

Note that if $\Delta$ is a triangulation of a connected compact $d$-manifold (or homology $d$-manifold), then $\C= \mathcal{F}(\Delta)$ is a $d$-uniform Pseudo-manifold (see \cite[\S43, \S63]{Munkres}). So that we may use Theorem~\ref{Resolution of Minimal to linearity} to find the minimal free resolution of the ideal $I(\bar{\C})$. It is worth to say that pseudo-manifolds are strictly contained in $\mathscr{C}^{\rm a.tree}_d$.

\begin{ex} \label{non-pseudo-manifold} \rm
Let $\Delta$ be a triangulation of the following shape and $\C= \mathcal{F} (\Delta)$. That is:
\begin{align*}
\Delta = \langle & a23, b14, ab1, a12, ab4, a34, 236, 367, 125, 256, 145, 458, 348, 378, \\
& a67, b58, ab5, a56, ab8, a78 \rangle.
\end{align*}
\begin{center}
\psset{xunit=1.0cm,yunit=1.0cm,algebraic=true,dotstyle=o,dotsize=3pt 0,linewidth=0.8pt,arrowsize=3pt 2,arrowinset=0.25}
\begin{pspicture*}(-0.52,1.0)(8.0,4.0)
\psline(1,3)(1,2)
\psline(2,1.34)(2,3)
\psline(2,3)(3.02,3.72)
\psline[linestyle=dashed,dash=2pt 2pt](3.02,3.72)(3,2)
\psline[linestyle=dashed,dash=2pt 2pt](3,2)(2,1.34)
\psline(3.02,3.72)(1,3)
\psline(2,3)(1,3)
\psline(1,2)(2,1.34)
\psline[linestyle=dashed,dash=2pt 2pt](1,2)(3,2)
\psline(3.02,3.72)(5.96,3.72)
\psline(2,3)(5,3)
\psline(2,1.34)(4.98,1.42)
\psline[linestyle=dashed,dash=2pt 2pt](3,2)(6,2)
\psline(5.96,3.7)(5,3)
\psline(5,3)(4.98,1.42)
\psline[linestyle=dashed,dash=2pt 2pt](5.96,3.7)(6,2)
\psline[linestyle=dashed,dash=2pt 2pt](6,2)(4.98,1.42)
\psline(7,2)(7,3)
\psline(7,2)(4.98,1.42)
\psline[linestyle=dashed,dash=2pt 2pt](7,2)(6,2)
\psline(7,3)(5,3)
\psline(7,3)(5.96,3.7)
\rput[tl](0.78,3.2){$a$}
\rput[tl](0.78,2.1){$b$}
\rput[tl](7.04,3.2){$a$}
\rput[tl](7.08,2.1){$b$}
\rput[tl](1.88,1.3){$1$}
\rput[tl](1.82,3.26){\small $2$}
\rput[tl](2.92,4.0){$3$}
\rput[tl](3.08,2.3){$4$}
\rput[tl](4.9,1.3){$5$}
\rput[tl](4.86,3.26){$6$}
\rput[tl](5.94,4.0){$7$}
\rput[tl](6.04,2.3){$8$}
\end{pspicture*}
\end{center}
Then, $\C$ is not a pseudo-manifold, because $\deg_\C (ab) = 4$, but $\C$ is almost tree and hence minimal to linearity.
\end{ex}

\begin{ex} \rm
Let $\Delta_1$ be a triangulation of a Torus and $\Delta_2$ be a triangulation of a projective plane such that they intersect in one triangle and let $\C = \mathcal{F}(\Delta_1) \cup \mathcal{F}(\Delta_2)$ be the corresponding $3$-uniform clutter on the vertex set $[n]$.

In view of Theorem~\ref{betti number with E(C_1) cap E(C_2)= empty}(ii), $\reg (I) =4$ in any characteristic of the base field.

\end{ex}


\end{document}